\theoremstyle{plain}
\newtheorem{thm}{Theorem}
\newtheorem{lem}[thm]{Lemma}
\newtheorem{prop}[thm]{Proposition}
\newtheorem{cor}[thm]{Corollary}
\newtheorem{defn}[thm]{Definition}
\theoremstyle{remark}
\newtheorem{rem}[thm]{Remark}
\newtheorem*{sketch}{\textbf{Sketch of the proof}}
\numberwithin{thm}{section}
\numberwithin{equation}{section}
\newenvironment{AMS}{\textbf{\textit{MSC 2020 Subject Classification:}}}{}
\newenvironment{keywords}{\textbf{\textit{Keywords:}}}{}
\newenvironment{acknowledgements}{\textbf{Acknowledgements}}{}
\newenvironment{headline}{\textbf{\textit{Running headline:}}}{}
\newcommand{\proba}{\mathbb{P}}
\newcommand{\nmath}{\mathbb{N}}
\newcommand{\zmath}{\mathbb{Z}}
\newcommand{\fcal}{\mathcal{F}}
\newcommand{\hcal}{\mathcal{H}}
\newcommand{\lcal}{\mathcal{L}}
\newcommand{\vcal}{\mathcal{V}}
\newcommand{\Ext}{\mathrm{Ext}}
\newcommand{\Card}{\mathrm{Card}}
\newcommand{\dboa}{\mathrm{DiffBOA}}
\newcommand{\errpar}{\mathrm{Par}}
\newcommand{\errf}{\mathrm{ErrF}}
\newcommand{\errg}{\mathrm{ErrG}}
\newcommand{\errgf}{\mathrm{ErrGF}}
\newcommand{\cond}{\mathrm{Cond}}
\newcommand{\un}[1]{\mathds{1}_{\{#1\}}}
\newcommand{\lgmr}{\lfloor gM \rfloor}
\title{Extinction threshold and large population limit of a plant metapopulation model with recurrent extinction events and a seed bank component}
\date{}
\author[,1]{Apolline Louvet \thanks{Email address : apolline.louvet@polytechnique.edu}}
\affil[1]{MAP5, Universit{\'e} de Paris, CNRS, 45 rue des Saints-P{\`e}res, 75270 Paris Cedex 06, France.}
\begin{document}
\maketitle

\begin{abstract}
We introduce a new model for plant metapopulations with a seed bank component, living in a fragmented environment in which local extinction events are frequent. This model is an intermediate between population dynamics models with a seed bank component, based on the classical Wright-Fisher model, and Stochastic Patch Occupancy Models (SPOMs) used in metapopulation ecology. 
Its main feature is the use of "ghost" individuals, which can reproduce but with a very strong selective disadvantage against "real" individuals, to artificially ensure a constant population size. We show the existence of an extinction threshold above which persistence of the subpopulation of "real" individuals is not possible, and investigate how the seed bank characteristics affect this extinction threshold. We also show the convergence of the model to a SPOM under an appropriate scaling, bridging the gap between individual-based models and occupancy models. 
\end{abstract}

\begin{headline}
The $k$-parent WFSB metapopulation model
\end{headline}

\begin{keywords}
Wright-Fisher model, seed-bank, extinction/recolonization, metapopulation, Stochastic Patch Occupancy Model, percolation
\end{keywords}

\begin{AMS}
  \textit{Primary:} 60F99, 60J05, 92D25,
  \textit{Secondary:} 60K35, 92D40
\end{AMS}

\tableofcontents

\clearpage

\section{Introduction}
Understanding how plant populations survive in fragmented landscapes is an important question in ecology and conservation biology \cite{fahrig2003effects}. One potential driver of plant populations' persistence is the ability to form a seed bank, which greatly influences population and community dynamics \cite{fenner1995ecology}. For such plant species, the seeds produced can stay dormant in the soil for up to several decades depending on the species, without losing viability \cite{baskin2014seeds}. See \cite{lennon2020principles} for an overview of seed bank characteristics and properties, along with the emergent phenomena it can generate.

Populations living in fragmented landscapes are often modelled as metapopulations, that is, as populations distributed over a set of interconnected patches. Metapopulations are also frequently characterized by recurrent local extinction events, regional persistence being the result of a balance between colonization (from neighbouring patches or from an external source) and local extinction events \cite{levins1969some,macarthur1967theory}. See \cite{hanski1997metapopulation} for a general introduction to metapopulation theory.

Many classical metapopulation models, such as the Levins model \cite{levins1969some} or the Propagule Rain model \cite{gotelli1991metapopulation}, describe the occupancy of each patch (i.e whether the species of interest is present or absent in each of the patches) and do not depend on, nor model, the actual census numbers. 
These models are referred to as Stochastic Patch Occupancy models, or SPOMs. Since presence/absence data is easier to collect than abundance data, and since parameter inference is possible for a broad range of SPOMs (see e.g \cite{moilanen1999patch,moilanen2004spomsim,pluntz2018general}), they are well-suited to the study of real metapopulations. Classical metapopulation models do not account for seed dormancy, but more recently models incorporating a seed bank component were also developed  \cite{borgy2015dynamics,freville2013inferring,pluntz2018general}. The model introduced in \cite{pluntz2018general} was successfully applied to plant metapopulations in highly disturbed environments, such as weeds in agroecosystems \cite{pluntz2018general} or plants in urban tree bases \cite{louvet2021detecting}, highlighting that some plant species monitored did have a seed bank.

In population genetics, metapopulation models often describe the number and genetic types of individuals rather than the occupancy in each patch.
They are usually defined by first specifying an intra-patch dynamic, and then adding migration between patches. The migration process can  heavily depend on the underlying geographical structure, as in the stepping-stone model \cite{kimura1964stepping}, or not depend on it at all, as in Wright's island model \cite{wright1931evolution}. See e.g \cite{lambert2015coalescent,taylor2009coalescent,wakeley2001gene} and references therein for examples of metapopulation models based on Wright's island model, and \cite{austerlitz1997evolution,barton2013genetic,peischl2015expansion} and references therein for examples of metapopulation models based on the stepping-stone model. 

Models used to specify the intra-patch dynamic can be classical population dynamics models, without any intra-patch spatial structure, provided patches are considered as sufficiently small to neglect spatial effects in each one of them. The geographical structure in the metapopulation model is then only contained in the localization of the patches. 
The intra-patch dynamic can comprise a seed bank, using population dynamics models with a seed bank component, such as the ones based on the Wright-Fisher model.
In the original Wright-Fisher model, the population size (in a single patch) is constant through time and equal to $N$, and each individual has a genetic type, or allele. In each generation, each one of the $N$ new individuals chooses a parent uniformly at random among the $N$ individuals in the previous generation, and adopts its type. Including a seed bank in the Wright-Fisher model implies choosing a parent potentially not in the previous generation, but at least two generations ago, the maximal number of potentially contributing generations being bounded \cite{kaj2001coalescent} or not \cite{blath2013ancestral,blath2016new}. 
See \cite{blath2020population} for a review of seed bank models in population genetics, and \cite{den2017multi,greven2020spatial,vzivkovic2012germ} for extensions of the Wright-Fisher model with a seed bank component to metapopulations. 

For plant metapopulations in which extinction events are frequent, we can expect the population size of each patch to vary a lot from one generation to the next. This contradicts the constant patch population size hypothesis underlying the use of a Wright-Fisher model. 
In order to incorporate extinction event-induced fluctuations in a Wright-Fisher model, it is possible to adopt the approach used in \cite{durrett2016genealogies,hallatschek2008gene}, based on a long-range biased voter model: assign a maximal population size to each patch, and fill the remaining space with "ghost", or type $0$, individuals. In this framework, each patch contains both type $1$ "real" individuals and type $0$ "ghost" individuals, the former having a very strong selective advantage over the latter (in the spirit of \cite{louvet2021k}). As in \cite{durrett2016genealogies,hallatschek2008gene}, the model can then be interpreted as an interacting particle system, in which a $0$ corresponds to a ghost individual and a $1$ to a real individual.

In this article, we introduce a new individual-based metapopulation model for plant metapopulations in which local extinction events are frequent.
This model is primarily suited to annual plants living in highly disturbed patchy environments, such as urban tree bases or agroecosystems. It is also adapted to other plant species living in such environments, provided each patch is "emptied" at the end of each generation (for instance by gardeners in an urban environment or by farmers in an agroecosystem).
The intra-patch dynamics will be based on a variant of a Wright-Fisher model with a seed bank component, using ghost individuals to allow for fluctuating patch population sizes. It will use the model introduced in \cite{blath2016new}, with an extra bound introduced on the number of generations a seed can stay dormant without losing viability. Indeed, for some plant species, seeds lose viability after only one or two years of dormancy \cite{baskin2014seeds}. This is reminiscent of the model introduced in \cite{kaj2001coalescent}, in the sense that real individuals come from parents living a bounded number of generations ago. The main difference is that in our model, individuals first choose parents living potentially arbitrarily far ago in the past, and then obtain their (real or ghost) type depending on their choice. 

In order to bridge the gap between individual-based metapopulation models and SPOMs, we will show that our metapopulation process can be embedded in a SPOM. Moreover, we will prove that under an appropriate scaling of patch population size and of the "selection" strength of real individuals against ghost individuals (quantified by the parameter $k$), the individual-based metapopulation process converges to this SPOM. The convergence result has two applications. First, from a theoretical viewpoint, it shows that a specific SPOM (or presence/absence-based model) is the scaling limit of an individual-based metapopulation model. Then, we will use the convergence result and the embedding in order to show the existence of an extinction threshold for metapopulation persistence, depending only on the seed bank parameters, and highlighting how the presence of a seed bank can prevent metapopulation extinction.

While the metapopulation model we will introduce and study is based on models coming from population genetics, this article will not focus on the study of the genetic diversity in such populations, which is deferred to future work. Instead, the aims of this work are threefold:
\begin{enumerate}
\item Introduce a general individual-based metapopulation model with a seed bank component, in which local extinction events can be frequent and patch population sizes can vary from one generation to the next.
\item Show the existence of an extinction threshold depending on the seed bank parameters.
\item Bridge the gap between SPOMs and individual-based metapopulation models by showing that in a well-chosen parameter regime, the individual-based metapopulation model we consider converges to a SPOM.
\end{enumerate}

\subsection{The $k$-parent Wright-Fisher metapopulation process with seed bank}
In the whole paper, we use the notation $\nmath = \{0, 1, 2,...\}$, and for all $M \in \nmath \backslash \{0\}$, 
$\llbracket 1,M \rrbracket = \{1,...,M\}$. 

We will consider that the metapopulation is formed by an infinite number of patches arranged in a line. A patch contains a fixed number of \textit{seed bank compartments}, each one containing exactly one seed: either a ghost (type $0$) seed, or a real (type $1$) seed. In order to define the metapopulation model, we describe how in each generation, seeds germinate and grow into plants which produce new seeds and die. Concretely, the metapopulation model will only record the composition of the \textit{seed bank} at the beginning of each generation, and not the standing vegetation in each patch in each generation. 

In all that follows, let $M \in \nmath^{*}$, $H \in \nmath$, $k \in \nmath \backslash \{0,1\}$, $g \in (0,1)$, $c \in (0,1/2)$ and $p \in [0,1]$. We assume that $\lfloor gM \rfloor \geq 1$. Patches will be indexed by $i \in \zmath$, and seed bank compartments inside a patch by $j \in \llbracket 1,M \rrbracket$. The notation $(i,j)$ will correspond to the seed bank compartment $j$ in patch $i$.

The following two spaces will be used to describe the initial types and the age of the seeds occupying the seed bank compartments:
\begin{align*}
\fcal_{M} &:= \left\{(\xi_{i,j})_{i \in \zmath, j \in \llbracket 1,M \rrbracket} \in \{0,1\}^{\zmath \times \llbracket 1,M \rrbracket} :
\mathrm{Card} \left(\left\{
(i,j) \in \zmath \times \llbracket 1,M \rrbracket : \xi_{i,j} = 1
\right\}
\right) < + \infty
\right\}, \\
\text{and } \hcal_{M} &:= \left\{
(h_{i,j})_{i \in \zmath, j \in \llbracket 1,M \rrbracket} : \forall (i,j) \in \zmath \times \llbracket 1,M \rrbracket, h_{i,j} \in \nmath
\right\}.
\end{align*}

$(\xi, h) \in \fcal_{M} \times \hcal_{M}$ corresponds to a metapopulation in which for all $(i,j) \in \zmath \times \llbracket 1,M \rrbracket$, the seed occupying the seed bank compartment $(i,j)$ is of age $h_{i,j}$ and of type $\xi_{i,j}\un{h_{i,j} \leq H}$. That is, the seed in $(i,j)$ was originally of type $\xi_{i,j}$ when it was produced, but may have expired since then. 

The $k$-parent Wright-Fisher metapopulation process with seed bank is defined in the following way. 

\begin{defn}($k$-parent WFSB metapopulation process) 
Let $(\xi,h) \in \fcal_{M} \times \hcal_{M}$. The $k$-parent Wright-Fisher metapopulation process with seed bank, with parameters $(M,H,g,c,p)$ and initial condition $(\xi,h)$ and denoted by $(\xi^{n},h^{n})_{n \in \nmath}$, is the $\left(\fcal_{M} \times \hcal_{M}\right)$-valued Markov chain defined by $(\xi^{0},h^{0}) = (\xi,h)$ and for all $n \in \nmath$, given $(\xi^{n},h^{n})$ :
\begin{enumerate}
\item For each $i \in \zmath$, we sample $\lfloor gM \rfloor$ different seed bank compartments $s_{i,1},...,s_{i,\lfloor gM \rfloor} \in \llbracket 1,M \rrbracket$ uniformly at random in patch $i$. 
\item Let $(\mathrm{Ext}_{i})_{i \in \zmath}$ be i.i.d $\{0,1\}$-valued random variables such that $\proba(\Ext_{1} = 1) = p$. 
\item For all $(i,j) \in \zmath \times \llbracket 1,M \rrbracket$, if $j \in \{s_{i,j'} : j' \in \llbracket 1,\lfloor gM \rfloor \rrbracket\}$, we first set $h_{i,j}^{n+1} = 0$.
Moreover, let $C_{1,i,j},...,C_{k,i,j}$ be i.i.d $\{-1,0,1\}$-valued random variables such that
\begin{equation*}
\proba(C_{1,i,j} = 1) = \proba(C_{1,i,j} = -1) = c,
\end{equation*}
and such that $(C_{l,i,j})_{1 \leq l \leq k}$ is independent from $(\mathrm{Ext}_{i})_{i \in \zmath}$ and from $(C_{i',j',l})_{1 \leq l \leq k}$ for $(i',j') \neq (i,j)$. 

For all $l \in \llbracket 1,k \rrbracket$, if $\Ext_{i+C_{l,i,j}} = 1$, we set $\tilde{k}_{l} = 0$, and if $\Ext_{i+C_{l,i,j}} = 0$, we sample one seed bank compartment $j_{l}$ uniformly at random among the $\lgmr$ ones sampled in the patch $i + C_{l,i,j}$ (those in the set $\{s_{i+C_{l,i,j},j'} : j' \in \llbracket 1,\lfloor gM \rfloor \rrbracket\}$), and we set
\begin{equation*}
\tilde{k}_{l} = \xi_{i+C_{l,i,j},j_{l}}^{n}  
\mathds{1}_{\{h_{i+C_{l,i,j},j_{l}}^{n} \leq H\}}.
\end{equation*}
We conclude by setting $\xi_{i,j}^{n+1} = \max\{\tilde{k}_{l} : l \in \llbracket 1,k \rrbracket \}$.
\item On the other hand, if $j \notin \{s_{i,j'} : j' \in \llbracket 1,\lfloor gM \rfloor \rrbracket\}$, we set $\xi_{i,j}^{n+1} = \xi_{i,j}^{n}$ and $h_{i,j}^{n+1} = h_{i,j}^{n} + 1$.
\end{enumerate}
\end{defn}

Intuitively, the $k$-parent WFSB metapopulation process evolves as follows.
\begin{enumerate}
\item At each generation, exactly $\lfloor gM \rfloor$ seeds germinate in each patch. Type $0$ seeds yield (ghost) type $0$ plants, while type $1$ seeds yield (real) type $1$ plants \textit{only if the seed was produced less than $H+1$ generations ago}, i.e, only if it has not expired.
\item Then, each patch is affected by an extinction event independently from other patches and with probability $p$. During an extinction event, all the juvenile plants in the patch become type $0$ plants. 
\item In each patch, the $\lfloor gM \rfloor$ empty seed bank compartments are filled with new seeds in the following way. For each compartment, $k$ potential parents are chosen uniformly at random, each one of them being chosen in the same patch with probability $1-2c$, or in the patch on the left (resp. on the right) with probability $c$. The same potential parent may be chosen more than once for the same seed bank compartment. If all the $k$ plants chosen as potential parents are of type $0$, then the seed bank compartment is filled with a type $0$ seed produced by the last plant chosen. Conversely, if at least one of the $k$ plants chosen is of type $1$, then the first type $1$ plant chosen produces a seed which fills the seed bank compartment.
\item The remaining seeds stay dormant in the seed bank until the next generation. 
\end{enumerate}
See Figure \ref{fig:k_parent_WFSB_schema} for an illustration of this dynamics. 
As mentioned above, observe that while the dynamics involves seeds germinating, growing into plants which produce new seeds and then die, the model only encodes the \textit{seed bank composition}, and not the types of the plants. 

\begin{figure}[ht]
\centering
\includegraphics[width = \linewidth]{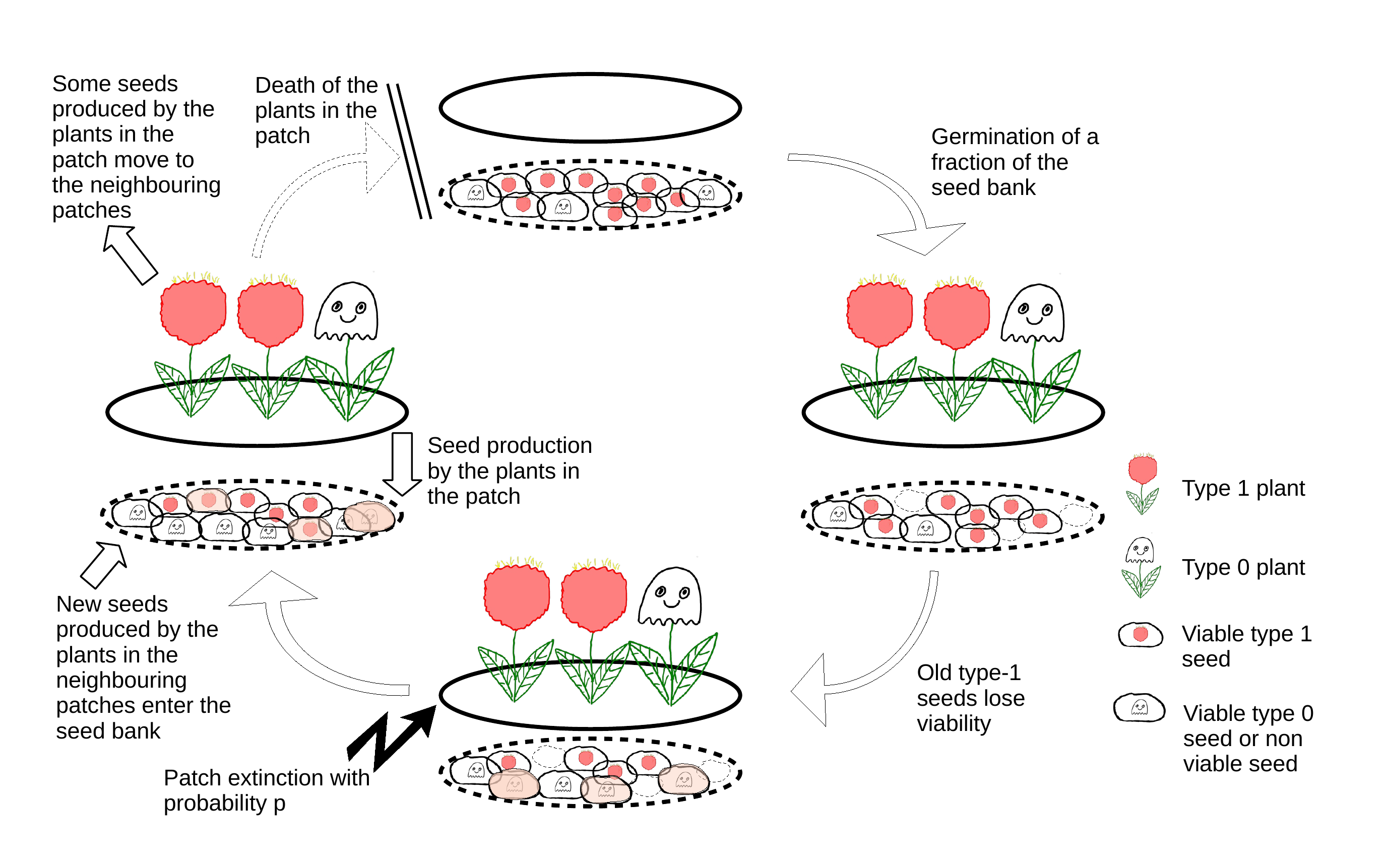}
\caption{Illustration of the intra-patch dynamics of the $k$-parent WFSB metapopulation process. Here $M = 12$ and $\lfloor gM \rfloor = 3$. The double line in the top line of the figure indicates the starting time of a new generation.}\label{fig:k_parent_WFSB_schema}
\end{figure}

In all that follows, we will refer to :
\begin{itemize}
\item $M$ as the \textit{number of seeds per patch},
\item $H$ as the \textit{maximal dormancy duration},
\item $g$ as the \textit{germination probability},
\item $c$ as the \textit{potential colonization probability},
\item $p$ as the \textit{patch extinction probability.}
\end{itemize}
Moreover, we will say that a patch \textit{goes extinct} during generation $n$ if it is affected by an extinction event during this generation (i.e $\textrm{Ext}_{i} = 1$), and that it is \textit{empty} if it does not contain any viable type $1$ seeds at the beginning of generation $n$. Notice that extinction events only affect standing vegetation, and not the seed bank. Therefore, in our terminology, an extinct patch is not necessarily empty. 
We will also say that the metapopulation \textit{became empty} before generation $n$ if all the patches are empty in generation~$n$, that is, if the subpopulation of real individuals did not persist. 

\begin{rem}
From a purely mathematical viewpoint, it is also possible to define the model for $k = 1$. However, in this case, real individuals do not have any selective advantage against ghost individuals, and since we assume that the number of real individuals in generation~$0$ is finite, the metapopulation becomes empty in finite time almost surely. 
\end{rem}

\begin{rem}
Even if the model is defined for $c \in (0,1/2)$, in practice, since one of the assumptions behind the model is that colonization from patches which are not nearest neighbours is negligible, it is implicitly assumed that $c$ is small. Moreover, notice that if $c > 1/3$, then the potential parents have higher chance of being taken from the patch on the left (or right) than in the focal patch.
\end{rem}

Before stating our main results on the model, we give some insight on the reproduction dynamics from a \textit{forwards in time} viewpoint. In order to do so, we first consider a real plant isolated in a patch, surrounded by empty patches. Each empty seed bank compartment in the focal patch chooses the real plant as a potential parent with probability $1-(1-(1-2c) \lfloor gM \rfloor^{-1})^{k}$, and each empty seed bank compartment in one of the two neighbouring patches chooses it with probability $1-(1-c \lfloor gM \rfloor^{-1})^{k}$. Therefore, from a forwards in time viewpoint, the number of offspring of the real plant is distributed as the sum of three independent random variables: one binomial random variable with parameters $(\lfloor gM \rfloor, 1-(1-(1-2c)\lfloor gM \rfloor^{-1})^{k})$, and two binomial random variables with parameters $(\lfloor gM \rfloor, 1-(1-c\lfloor gM \rfloor^{-1})^{k})$.

If $M \to + \infty$ while all other parameters stay constant, the number of offspring is approximately distributed as the sum of three independent Poisson random variables: one with parameter $(1-2c)k$ and two with parameter $ck$. In particular, the average number of offspring is roughly equal to $k$, yielding a possible biological interpretation for the parameter $k$. In this article, we focus instead on another parameter regime: $M \to + \infty$ and $k = \lceil M^{\alpha} \rceil$, $\alpha > 1$. In this regime, the average number of offspring is rather equal to $3 \lfloor gM \rfloor$. From a biological viewpoint, this parameter regime corresponds to considering that a plant can potentially produce far more viable seeds than the carrying capacity of a patch. 

When several real plants are present, the average number of offspring produced by each plant decreases due to competition. In the case in which competition is the most intense (that is, when the patch and its two neighbours contain only real plants), the number of offspring of each plant in the focal patch is in average equal to $1$, and is distributed as the sum of three independent random variables: one binomial random variable with parameters $(\lfloor gM \rfloor, (1-2c)\lfloor gM \rfloor^{-1})$, and two binomial random variables with parameters $(\lfloor gM \rfloor, c\lfloor gM \rfloor^{-1})$. When $M \to + \infty$ and $k$ is fixed, this can be approximated by the sum of a Poisson random variable with parameter $1-2c$ and two Poisson random variables with parameter $c$. In particular, the random variables do not depend on the parameter $k$. 

\begin{rem}
It is possible to generalize the $k$-parent WFSB metapopulation process by taking the potential parents of a seed in more patches than only neighbouring patches, or by having patches in a two dimensional environment instead of a one dimensional one. If the distance that seeds can travel is bounded, then all the results in this article can be extended to the generalized model (though the numerical values for the extinction thresholds will change). 
\end{rem}

\begin{rem}
The idea of sampling several potential parents to model selection can be found in various population genetics models, including variants of the Wright-Fisher model. See e.g 
\cite{boenkost2020haldane,boenkost2021haldane,cordero2019general,forien2017central,casanova2020lambda,casanova2018duality}.
Usually, the models comprise both selective reproduction events, during which several potential parents are chosen, and neutral reproduction events, during which only one parent is chosen. Moreover, the mathematical analysis often involves taking selective reproduction events to be rare compared to neutral reproduction events, and to change of time scale to observe them in the limit. 
In contrast, the model we introduce in this article only comprises selective reproduction events, and the questions we aim at answering do not require a change of time scale.
\end{rem}

\subsection{The associated $k$-parent occupancy process and its limit}
\subsubsection{BOA process and $k$-parent occupancy process}
The $k$-parent WFSB metapopulation process can be seen as a multi-colony Wright-Fisher model with selection and seed bank, embedded in a Stochastic Patch Occupancy Model indicating which patches are empty, extinct, or potentially occupied. 
Indeed, for a patch to contain real seeds, it is not sufficient for it not to go extinct. The viable seeds it contains can only come from $3$ patches (the focal patch and its two neighbours), and can only have entered the seed bank during the $H+1$ previous generations. If all these times, the $3$ patches were affected by extinction events, then the patch cannot contain viable seeds during the current generation. For instance, if $H = 0$, a patch which went extinct along with its two neighbours during the previous generation cannot contain non-expired type $1$ seeds. In the SPOM we define just below, this patch will appear as empty. In other words, the SPOM will encode which patches \textit{cannot} contain type $1$ seeds, given the initial condition and the extinction events.

This SPOM is defined on the state space $\fcal^{\infty} \times \hcal^{\infty}$, with $\fcal^{\infty}$ and $\hcal^{\infty}$ given by:
\begin{align*}
\fcal^{\infty} &:= \left\{
(O_{i})_{i \in \zmath} : \forall i \in \zmath, O_{i} \in \{0,1\} \text{ and } \mathrm{Card}\left(\left\{
i \in \zmath : O_{i} = 1
\right\}\right) < + \infty \right\} \\
\text{and } \hcal^{\infty} &:= \left\{
(h_{i})_{i \in \zmath} : \forall i \in \zmath, h_{i} \in \nmath
\right\}.
\end{align*}
As for the $k$-parent WFSB metapopulation process, each patch is associated to a type ($0$ or $1$) and an age, but now they have a different interpretation. Indeed, in the SPOM, a "type~$0$" patch corresponds to a patch which cannot contain nonexpired type~$1$ seeds, while a "type~$1$" patch is a patch which can potentially contain type~$1$ seeds, the age $h_{i}$ encoding the last time type~$1$ seeds could have entered the seed bank.

\begin{defn}(BOA process) Let $(O,h) \in \fcal^{\infty} \times \hcal^{\infty}$. The Best Occupancy Achievable process (or BOA process) with parameters $(H,p)$ and with initial conditions $(O,h)$ is the $\left(\fcal^{\infty} \times \hcal^{\infty}\right)$-valued Markov process
$(O^{\infty,n},h^{\infty,n})_{n \in \nmath}$ defined as follows. 
First, we set $(O^{\infty,0},h^{\infty,0}) = (O,h)$. Then, for all $n \in \nmath$, given $(O^{\infty,n},h^{\infty,n})$~:
\begin{enumerate}
\item Let $(\Ext_{i})_{i \in \zmath}$ be i.i.d $\{0,1\}$-valued random variables such that $\proba(\Ext_{1} = 1) = p$. 
\item For all $i \in \zmath$, if $\Ext_{i} = 0$ and $O_{i}^{\infty,n} \mathds{1}_{\{h_{i}^{\infty,n} \leq H\}} = 1$, then we set
\begin{align*}
O_{i-1}^{\infty,n+1} = O_{i}^{\infty,n+1} = O_{i+1}^{\infty,n+1} &= 1 \\
\text{and }\qquad  h_{i-1}^{\infty,n+1} = h_{i}^{\infty,n+1} = h_{i+1}^{\infty,n+1} &= 0. 
\end{align*}
We do nothing during this step if $\Ext_{i} = 1$ or $O_{i}^{\infty,n} \mathds{1}_{\{h_{i}^{\infty,n} \leq H\}} = 0$. 
\item For all $i \in \zmath$, if $O_{i}^{\infty,n+1}$ was not defined during step~$2$, then we set $O_{i}^{\infty,n+1} = O_{i}^{\infty,n}$ and $h_{i}^{\infty,n+1} = h_{i}^{\infty,n} + 1$.
\end{enumerate}
Moreover, we will say that patch $i \in \zmath$ is reachable at generation $n \in \nmath$ if $O_{i}^{\infty,n} \un{h_{i}^{\infty,n} \leq H} = 1$.
\end{defn}

The BOA process represents all the patches which can potentially contain seeds produced by the ones initially present (as given by $(O,h)$), given the extinction events. In other words, informally, the BOA process keeps track of the 
patches that are linked to the patches originally containing viable seeds by means of a path of reachable patches. Notice that $O_{i}^{\infty,n}$ describes the composition of the seed bank, while extinction events affect the standing vegetation. Therefore, an extinction event affecting patch $i$ during the n-th generation does not set the value of $O_{i}^{\infty,n}$ to $0$.

The BOA process is a best-case scenario, in the sense that using the same extinction events to construct the BOA process and the $k$-parent WFSB metapopulation process, it is possible to couple both processes so that all patches containing seeds in the $k$-parent WFSB metapopulation process are reachable patches in the BOA process. In order to formalize the coupling property, we introduce a new object associated to our metapopulation process, describing whether the seed bank in each patch contains real seeds, or only ghost seeds.

\begin{defn}($k$-parent occupancy process)
Let $(\xi,h) \in \fcal_{M}\times \hcal_{M}$. The $k$-parent occupancy process 
\begin{equation*}
\left(O^{k,n},h^{k,n}\right)_{n \in \nmath}
= \left(\left(O_{i}^{k,n},h_{i}^{k,n}\right)_{i \in \zmath}\right)_{n \in \nmath}
\end{equation*}
associated to the $k$-parent WFSB metapopulation process $(\xi^{n},h^{n})_{n \in \nmath}$ with parameters $(M,H,g,c,p)$ and initial conditions $(\xi,h)$ is defined as follows.

First, for all $i \in \zmath$, we set
\begin{align*}
O_{i}^{k,0} &:= 1 - \prod_{j \in \llbracket 1,M \rrbracket}\left(1-\xi_{i,j}\right)
= \max\{\xi_{i,j} : j \in \llbracket 1,M \rrbracket\} \\
h_{i}^{k,0} &:= \begin{cases}
\min \{
h_{i,j} : j \in \llbracket 1,M \rrbracket \text{ and } \xi_{i,j} = 1
\} &\text{ if } O_{i}^{k,0} = 1 \\
0 &\text{ otherwise}.
\end{cases}
\end{align*}

Then, for all $n \in \nmath^{*}$ and $i \in \zmath$, we set
\begin{align*}
O_{i}^{k,n} &:= 1 - \prod_{j \in \llbracket 1,M \rrbracket}\left(1-\xi_{i,j}^{n}\right) \\
h_{i}^{k,n} &:= \begin{cases}
\min \{
h_{i,j}^{n} : j \in \llbracket 1,M \rrbracket \text{ and } \xi_{i,j}^{n} = 1
\} &\text{ if } O_{i}^{k,n} = 1 \\
h_{i}^{k,n-1} + 1 &\text{ otherwise}.
\end{cases}
\end{align*}
\end{defn}

In this setting, $O_{i}^{k,n} = 1$ if and only if at the beginning of generation~$n$, before germination occurs, the patch $i$ contains at least one (potentially expired) seed which was initially of type~$1$. In this case, $h_{i}^{k,n}$ is the number of complete generations spent in the seed bank by the youngest of such seeds. Therefore, patch $i$ contains at least one type $1$ seed at the beginning of generation $n$ if, and only if it contains seeds which where initially of type $1$ (i.e, $O_{i}^{k,n} = 1$) and the youngest seeds among these ones entered the seed bank at most $H+1$ generations ago (i.e, $h_{i}^{k,n} \leq H$, as $h_{i}^{k,n} = 0$ if the seeds entered the seed bank during the previous generation), that is, if and only if
$O_{i}^{k,n} \mathds{1}_{\{h_{i}^{k,n} \leq H \}} = 1$.

\begin{rem}
Note that the $k$-parent occupancy process is also defined on the state space $\fcal^{\infty} \times \hcal^{\infty}$. However, contrary to the BOA process, the $k$-parent occupancy process \textit{cannot} be considered as a SPOM, since $(O^{k,n+1},h^{k,n+1})$ does not depend only on $(O^{k,n},h^{k,n})$. Therefore, both processes are intrinsically different.
\end{rem}

In all that follows, we will say that the BOA process $(O^{\infty,n},h^{\infty,n})_{n \in \nmath}$ \textit{associated to} the $k$-parent WFSB metapopulation process with parameters $(M,H,g,c,p)$ and initial condition $(\xi,h)$ is the BOA process with parameters $(H,p)$ and initial condition $(O^{k,0},h^{k,0})$, constructed using the same extinction events as the $k$-parent WFSB metapopulation process. Under this coupling, the $k$-parent WFSB metapopulation process and its associated BOA process satisfy the following relation:
\begin{equation*}
\forall n \in \nmath, \forall i \in \zmath, O_{i}^{k,n} \leq O_{i}^{\infty,n} \text{ and } h_{i}^{k,n} \geq h_{i}^{\infty,n}.
\end{equation*}
This result will be proved in Section \ref{sec:BOA_process}.

\subsubsection{Convergence of the $k$-parent occupancy process to the BOA process}

When $M$ and $k$ are finite, deviations from the BOA process can occur in the following three cases:
\begin{enumerate}
\item Type $1$ plants are present in a patch, but none of them is chosen as a potential parent.
\item Non-expired type $1$ seeds are present in a patch, but none of them germinate.
\item Several type $1$ seeds entered the seed bank less than $H+1$ generations ago, but all of them already germinated, and there is no remaining non-expired type $1$ seeds in the seed bank.
\end{enumerate}

However, when both $M \to + \infty$ and $k \to + \infty$ in an appropriate way, we can show that the occupancy process converges to the BOA process. For this convergence to occur, two conditions need to be satisfied. First, $k$ needs to grow to $+ \infty$ "faster" than $M$. We will set $k = \lceil M \rceil^{\alpha}$, with $\alpha > 1$, and hence define a sequence of $\lceil M \rceil ^{\alpha}$-parent WFSB processes. Notice that since the $k$ potential parents of an individual do not have to be necessarily different, it is possible to have $k > 3\lfloor gM \rfloor$ (the number of plants in the focal patch and the two neighbouring patches). 
Then, we will need the following constraints on the initial conditions of the processes. Let $(O^{\infty},h^{\infty}) \in \left(\fcal^{\infty} \times \hcal^{\infty}\right)$ satisfying
\begin{equation}\label{eqn:condition_ci_boa} 
\forall i \in \zmath, \text{ if } O_{i}^{\infty} = 0, \text{ then } h_{i}^{\infty} = 0
\end{equation}
encode which patches are initially occupied, and what is the age of the youngest type $1$ seeds in each of these patches. By convention, we set $h_{i}^{\infty} = 0$ for the patches initially empty. For all $i \in \zmath$ such that $O_{i}^{\infty} = 1$, let $g_{i} \in (0,1]$, which will represent the proportion of youngest type $1$ seeds in patch $i$. 

For all $M \geq 2$, let $(\xi^{(M)},h^{(M)}) \in (\fcal_{M} \times \hcal_{M})$ be such that for all $i \in \zmath$ and $j \in \llbracket 1,M \rrbracket$, the following conditions are satisfied. 
\begin{align*}
\text{(A) }& \text{If } O_{i}^{\infty} = 0, \text{ then } \xi_{i,j}^{(M)} = 0. \\
\text{(B) }& \text{If } O_{i}^{\infty} = 1 \text{ and } h_{i,j}^{(M)} < h_{i}^{\infty}, \text{ then } \xi_{i,j}^{(M)} = 0.\\
\text{(C) }& \text{If } O_{i}^{\infty} = 1, \text{ then } M^{-1}\sum_{i = 1}^{M} \xi_{i,j}^{(M)} \un{h_{i,j}^{(M)} = h_{i}^{\infty}} = M^{-1}\lfloor g_{i} M \rfloor.
\end{align*}
Intuitively, conditions (A), (B) and (C) ensure that the patches initially occupied in all $k$-parent WFSB metapopulation processes are the same. Conditions (B) and (C) implies that in each patch, the youngest type~$1$ seeds (if present) have the same age for all processes. Moreover, condition (C) quantifies the proportion of youngest type~$1$ seeds in the seed bank, and ensures they represent a non-negligible portion of the seed bank, even when $M \to + \infty$. Note that this constraint is on the proportion of the \textit{youngest} type~$1$ seeds, and not on the proportion of all type~$1$ seeds. 

\begin{thm}\label{thm:cvg_spom}
Let $\alpha > 1$. For all $M \geq 2$, let $\left(O^{(M),n},h^{(M),n}\right)_{n \in \nmath}$ be the $\lceil M^{\alpha} \rceil$-parent occupancy process associated to the $\lceil M^{\alpha} \rceil$-parent WFSB metapopulation process with parameters $(M,H,g,c,p)$ and initial condition $(\xi^{(M)},h^{(M)})$, and let $\left(O^{(M),\infty,n},h^{(M),\infty,n}\right)_{n \in \nmath}$ be the BOA process associated to the same WFSB metapopulation process. Then, for all $N \in \nmath$, 
\begin{equation*}
\proba\left(\bigcap_{n = 0}^{N}
\left(\left\{
\forall i \in \zmath, O_{i}^{(M),n} = O_{i}^{(M),\infty,n}
\right\} \cap \left\{
\forall i \in \zmath, h_{i}^{(M),n} = h_{i}^{(M),\infty,n}
\right\}\right)
\right) 
\xrightarrow[M \to + \infty]{}
1.
\end{equation*}
\end{thm}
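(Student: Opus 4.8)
The plan is to prove the stronger statement that, with probability tending to $1$, a quantitative strengthening of the coupling between the occupancy process and the BOA process survives up to generation $N$. By the coupling inequality (proved in Section \ref{sec:BOA_process}) one always has $O_i^{(M),n}\le O_i^{(M),\infty,n}$ and $h_i^{(M),n}\ge h_i^{(M),\infty,n}$, so whenever the BOA process declares a patch not reachable the occupancy process agrees on it, and the two $h$-coordinates merely increment from a common past value; the whole difficulty is therefore to rule out, on $\llbracket 0,N\rrbracket$, the three deviation mechanisms (1)--(3) listed just above the statement, i.e.\ that the occupancy process never drops strictly below the BOA process. First I would reduce to finitely many patches: in the BOA process the occupied set $\{i:O_i^{\infty,n}=1\}$ never shrinks and grows by at most one patch on each side per generation, so up to generation $N$ it stays inside the fixed finite set $\Lambda_N$ of patches at distance at most $N$ from $\{i:O_i^{\infty}=1\}$; by domination the same holds for the occupancy process, and for $i\notin\Lambda_N$ a direct induction using \eqref{eqn:condition_ci_boa} and conditions (A)--(C) gives $O_i^{(M),n}=O_i^{(M),\infty,n}=0$ and $h_i^{(M),n}=h_i^{(M),\infty,n}=n$ for all $n\le N$.

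The key device is a quantitative invariant. Let $(\delta_n)_{0\le n\le N}$ be a deterministic positive sequence, bounded below away from $0$ uniformly in $M$ (it will decay by at most a factor $\tfrac{1-g}{2}$ per generation and "reset" to $\asymp g$ whenever a patch is refilled, starting from $\delta_0=\min\{g_i:O_i^{\infty}=1\}$), and let $I_n$ be the event that (i) $(O^{(M),m},h^{(M),m})=(O^{(M),\infty,m},h^{(M),\infty,m})$ for all $m\le n$, and (ii) every patch $i$ with $O_i^{\infty,n}=1$ holds at generation $n$ at least $\lfloor\delta_n M\rfloor$ seeds originally of type $1$ and of age exactly $h_i^{\infty,n}$. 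These "youngest originally-type-$1$ seeds" are precisely the ones defining $h_i^{(M),n}$, and such a patch a fortiori holds at least $\lfloor\delta_n M\rfloor$ originally-type-$1$ seeds of any age, so (ii) prevents both mechanism (3) and a silent collapse of the $O$-coordinate at the next step; $I_0$ holds deterministically by conditions (B)--(C) and the definition of the associated BOA process.

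For the inductive step one shows $\proba(I_{n+1}^{c}\cap I_n)\to0$ by conditioning on the past $\fcal_n$: on $I_n$ the generation-$n$ configuration (which patches are occupied, reachable, extinct, and the ages of their youngest type-$1$ seeds) is $\fcal_n$-measurable, the BOA update is then deterministic given $(\Ext_i)_{i\in\zmath}$, and the occupancy update uses only the conditionally independent germination draws and potential-parent choices. Checking patch by patch in $\Lambda_N$, the cases are: a reachable non-extinct patch, or a patch with such a neighbour, must see essentially all of its $\lgmr$ refilled compartments receive a fresh (age-$0$) type-$1$ seed; any other occupied patch must retain a definite fraction of its youngest originally-type-$1$ seeds, aged by one, and receive no fresh type-$1$ seed, since no type-$1$ plant is present in its neighbourhood (once more by domination). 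In each case the BOA and occupancy updates coincide and the new count of youngest originally-type-$1$ seeds is at least of order $gM$ or $(1-g)\delta_n M$, hence $\ge\lfloor\delta_{n+1}M\rfloor$, so $I_{n+1}$ holds. The probabilistic inputs are elementary: the number of a fixed set of $\ge\lfloor\delta_n M\rfloor$ compartments lying among the $\lgmr$ germinating ones is hypergeometric, hence concentrated around its mean $\asymp g\delta_n M$ with error probability exponentially small in $M$, which rules out mechanism (2) (at least $\asymp g\delta_n M$ of the youngest type-$1$ seeds germinate into type-$1$ plants) and supplies the "retain enough seeds" bound; and a refilled compartment next to (or in) a patch carrying $\asymp g\delta_n M$ type-$1$ plants fails to choose one of them among its $k=\lceil M^{\alpha}\rceil$ potential parents with probability at most $(1-\tfrac12 c\delta_n)^{k}$, so a union bound over the $\lgmr$ compartments still gives $gM(1-\tfrac12 c\delta_n)^{M^{\alpha}}\to0$ because $\alpha>1$ --- which is precisely where the hypothesis $\alpha>1$ is used --- ruling out mechanism (1) and forcing every refilled compartment to type $1$. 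A union bound over the finitely many patches of $\Lambda_N$ then gives $\proba((G^{(n)})^{c}\mid\fcal_n)\le\varepsilon_M$ on $I_n$ with $\varepsilon_M\to0$ uniformly in the configuration; hence $\proba(I_{n+1}^{c}\cap I_n)\le\varepsilon_M$, and summing over $n<N$ with $\proba(I_0)=1$ (for $M$ large enough) yields $\proba(I_N)\ge1-N\varepsilon_M\to1$. Since $I_N$ is contained in the event of the statement, this concludes.

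The main obstacle is designing the invariant and checking that it closes: the bare coupling $O^{(M)}\le O^{(M),\infty}$ does not propagate the equality, so one must carry along a lower bound on the number of youngest originally-type-$1$ seeds in every occupied patch that is strong enough to kill mechanisms (2) and (3) and the collapse of $O$, yet is preserved by the combined effect of germination (seeds leaving) and refilling (fresh age-$0$ seeds appearing) in a way that keeps the $h$-coordinates aligned throughout the whole case analysis. Once that bookkeeping is set up correctly, the tail estimates are routine, and the role of $\alpha>1$ is exactly to make "a few type-$1$ plants $\Rightarrow$ all $\lgmr$ refilled compartments turn type-$1$" hold with probability $1-o(1)$.
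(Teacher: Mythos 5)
Your proposal is correct and follows essentially the same route as the paper: reduction to the finite window of patches reachable in $N$ steps, a quantitative invariant on the number of youngest originally-type-$1$ seeds propagated by induction and conditioning on the past, hypergeometric concentration for the germination draws, and the bound of order $gM(1-c^{*}/(gM))^{M^{\alpha}}\to 0$ (using $\alpha>1$) to force every refilled compartment to pick a type-$1$ parent. The only cosmetic difference is that you carry a one-sided lower bound $\lfloor\delta_{n}M\rfloor$ on the youngest type-$1$ seeds (which also absorbs the transition period via condition (C)), whereas the paper tracks two-sided windows $W_{h}^{(M),\epsilon}$ for the age-$h$ seed counts together with the ``all seeds of the same age have the same type'' property and treats the first $H+1$ generations separately.
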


One of the biological interpretations of this result is that in the limit considered, the metapopulation dynamics is well approximated by the BOA process. Moreover, this theorem bridges the gap between individual-based metapopulation models and SPOMs, in the sense that the BOA process is the limit of the $k$-parent WFSB metapopulation process under a suitable scaling. 

\begin{sketch}
The proof is structured as follows. 
First, we show that if the number of occupied patches is finite initially (as assumed in Condition~(A)), then it remains finite. The proof relies on the observation that colonization is only possible towards or from neighbouring patches. 

Then, we show that when $k = \lceil M^{\alpha} \rceil$ is large enough, in each patch, the $k$ potential parents chosen to refill any seed bank compartment in the patch span all the $3\lfloor gM \rfloor$ possible potential parents with high probability. In this case, the $\lfloor gM \rfloor$ new seeds entering the seed bank of the patch are all of the same type. Therefore, we can distinguish two periods:
\begin{itemize}
\item A transition period corresponding to the $H+1$ first generations, during which some seeds initially present are still in the seed bank and viable, implying that some viable seeds of the same age (and in the same patch) are potentially of different types;
\item What we will call the "post-transition period", corresponding to the other generations, during which all the seeds initially present and still in the seed bank are no longer viable, and during which all viable seeds of the same age and in the same patch are of the same type (with an high probability). 
\end{itemize}
We first focus on the proportion of viable seeds of a given age in the seed bank after the transition period. We show that for all $0 \leq h \leq H$, the proportion of age $h$ seeds (which are generally all of the same type) is roughly equal to $g(1-g)^{h}$, and that approximately $g^{2}(1-g)^{h}M$ ($> 1$ for $M$ large enough) such seeds germinate during the next generation. Notice that it is what we would obtain if each seed germinated independently from others and with probability $g$. In particular, if type~$1$ seeds enter the seed bank of a patch during generation~$n$, then with high probability at least one of them will germinate during each of the generations $n+1$, ..., $n+H+1$, and produce new seeds if the patch is not affected by an extinction event, as in the BOA process. 

We conclude by considering the transition period. At the beginning of generation $n \in \llbracket 0,H \rrbracket$, in each non-empty patch:
\begin{itemize}
\item Either no new type~$1$ seeds have already entered the seed bank. By conditions (B) and (C), the proportion of youngest type~$1$ seeds is then roughly equal to $g_{i}(1-g)^{n}$, and approximately $g g_{i} (1-g)^{n}M$ ($> 1$ for $M$ large enough) such seeds germinate during the next generation.
\item Either new type~$1$ seeds have already entered the seed bank. We are then in the same situation as during the post-transition period. 
\end{itemize}
\end{sketch}

\subsubsection{Critical patch extinction probability}

Using the coupling with the BOA process, we will also show the existence of a critical patch extinction probability $p_{crit}(H)$ depending only on $H$ such that for all $p > p_{crit}(H)$, the metapopulation will almost surely become empty in finite time no matter the values of $M$, $g$, $c$ or $k$.

\begin{thm}\label{thm:proba_critique}
For all $H \in \nmath$, there exists $p_{crit}(H) \in (0,1)$ such that for all $M \in \nmath^{*}$, $k \in \nmath \backslash \{0,1\}$, $g \in (0,1)$ and $c \in (0,1/2)$, for all $(\xi,h) \in \fcal_{M} \times \hcal_{M}$ and $p > p_{crit}(H)$, if $(O^{k,n},h^{k,n})_{n \in \nmath}$ is the $k$-parent occupancy process associated to the $k$-parent WFSB metapopulation process with parameters $(M,H,g,c,p)$ and initial condition $(\xi,h)$, then
\begin{equation*}
\lim\limits_{n \to + \infty} \proba\left(\forall i \in \zmath, O_{i}^{k,n}\mathds{1}_{\{h_{i}^{k,n} \leq H\}} = 0\right) = 1.
\end{equation*}
\end{thm}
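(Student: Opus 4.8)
The plan is to exploit the coupling established earlier between the $k$-parent occupancy process and its associated BOA process, which gives $O_i^{k,n}\un{h_i^{k,n}\leq H}\leq O_i^{\infty,n}\un{h_i^{\infty,n}\leq H}$ pointwise, so it suffices to prove the extinction statement for the BOA process with parameters $(H,p)$ alone. The key point is that the BOA process with parameters $(H,p)$ does not depend on $M$, $g$, $c$ or $k$ at all, so a single threshold $p_{crit}(H)$ will work uniformly. First I would set up the comparison: since $(\xi,h)\in\fcal_M\times\hcal_M$ forces finitely many occupied patches initially, the associated BOA process starts from a configuration $(O^{k,0},h^{k,0})$ with $\Card\{i:O_i^{k,0}=1\}<+\infty$, and it is enough to show that from any such finite initial condition the BOA process reaches the all-empty state (no reachable patches) in finite time almost surely when $p$ is large.

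The heart of the argument is a contour/oriented-percolation comparison. A patch $i$ can only ever become reachable at generation $n+1$ if some patch in $\{i-1,i,i+1\}$ was reachable at generation $n$ and survived the extinction event; moreover a reachable patch at generation $n$ that is not refreshed loses reachability after $H+1$ further generations. So the set of patches that are ever reachable is contained in the cluster, in a suitable space-time graph on $\zmath\times\nmath$ with bounded range (range $1$ in space, range $H+1$ in time), grown from the finitely many initially occupied patches, where a space-time site is "open" iff the corresponding patch does not go extinct, i.e. with probability $1-p$. The plan is: (i) describe this space-time graph precisely and verify that every reachable-patch history is a path in it of open sites; (ii) invoke a standard result that oriented/bond percolation on such a bounded-range graph has a subcritical regime — there is $p_{crit}(H)\in(0,1)$ such that for $1-p$ small enough (equivalently $p>p_{crit}(H)$) all open clusters are finite a.s., in fact the cluster of a fixed site is finite a.s.; (iii) conclude that the cluster generated by the finite initial set is a.s. finite, hence there is a (random, a.s. finite) generation after which no patch is reachable, which is exactly $\lim_{n}\proba(\forall i, O_i^{k,n}\un{h_i^{k,n}\leq H}=0)=1$ after passing back through the coupling. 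One should be slightly careful that "finite cluster" must be upgraded to "extinction by a finite time": since a dead patch can only be revived by a live reachable neighbour within $H+1$ generations, once the finite cluster is exhausted spatially it is also exhausted temporally within $H+1$ more steps, so finiteness of the cluster does give a finite extinction time.

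The main obstacle I anticipate is making the space-time percolation comparison fully rigorous, in particular handling the memory of length $H+1$ (the process is not Markov of range $1$ in time) and the fact that "reachability" propagates through a slightly intricate rule — a patch at generation $n$ is reachable iff it carries $O_i^{\infty,n}=1$ with age $\leq H$, and step 2 of the BOA dynamics rewrites three patches at once. The cleanest route is probably to bound the BOA process above by a simpler dominating process: declare a patch "potentially live at generation $n$" if within the last $H+1$ generations there was a generation $m$ and a patch $i'$ with $|i'-i|\leq n-m$ (a space-time cone of slope $1$) that did not go extinct and was itself potentially live, started from the finite initial set. This dominating process is manifestly a bounded-range oriented percolation exploration, dominates the true reachable set by the argument of (i), and for its subcriticality one can either cite a Peierls-type contour bound (the number of space-time animals of size $s$ grows at most exponentially, each occurring with probability $\leq (1-p)^{\Theta(s)}$) or reduce to the well-known fact that oriented site percolation on $\zmath\times\nmath$ is subcritical for small open-probability, with the constant depending on $H$ only through the range. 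Getting the exact numerical value of $p_{crit}(H)$ is not needed — only that it lies strictly in $(0,1)$, which the Peierls bound supplies.
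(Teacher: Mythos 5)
Your proposal is correct and follows the same overall architecture as the paper: couple the $k$-parent occupancy process to the BOA process through shared extinction events (Proposition \ref{prop:coupling_BOA}), reinterpret reachability in the BOA process as oriented site percolation on $\zmath\times\nmath$ with spatial range $1$ and temporal range $H+1$, and deduce almost sure extinction for $p$ above a threshold depending only on $H$. Where you genuinely diverge is in the two percolation ingredients. For the existence of a subcritical regime, the paper cites a sharp critical-point result from the percolation literature (Proposition \ref{prop:percolation}), which defines $p_{crit}(H)$ as the exact survival/extinction threshold; you instead propose a Peierls/first-moment count of open space-time paths (at most $(3(H+1))^{L}$ paths of length $L$, each open with probability $(1-p)^{L}$ since time strictly increases along a path), which is more elementary and self-contained but only yields \emph{some} threshold strictly inside $(0,1)$. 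That is enough for the theorem as stated, though not for the paper's surrounding claim that $p_{crit}(H)$ is optimal, which relies on sharpness together with Theorem \ref{thm:cvg_spom}. For the reduction from the finite initial set $\lcal$ of occupied patches to a single seed, the paper proves an FKG-type product bound $\proba(\text{no reachable patch at generation } n \text{ from } \lcal)\geq\proba(S_{n}(p)=\emptyset)^{\Card(\lcal)}$ (Lemma \ref{lem:percolation_1}), whereas you note that a finite union of a.s.\ finite clusters is a.s.\ finite; your route (or an outright union bound over $\lcal$) is simpler and sidesteps the correlation inequality. Two small points to make explicit: the initial type-$1$ seeds need not have age $0$, which only shrinks the reachable set (this monotonicity is the content of the paper's Lemma \ref{lem:percolation_2}); and to pass from ``extinction a.s.\ for $1-p$ small'' to ``extinction a.s.\ for all $p>p_{crit}(H)$'' you should invoke the standard monotone coupling in $p$, or simply take $p_{crit}(H)$ to be the explicit value produced by the path count.
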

The proof of this result, which can be found in Section~\ref{sec:ext_threshold}, relies on the coupling between the $k$-parent WFSB metapopulation process and the BOA process, together with appropriate results in percolation theory.

\begin{figure}[ht]
\centering
\includegraphics[width = 0.65\linewidth]{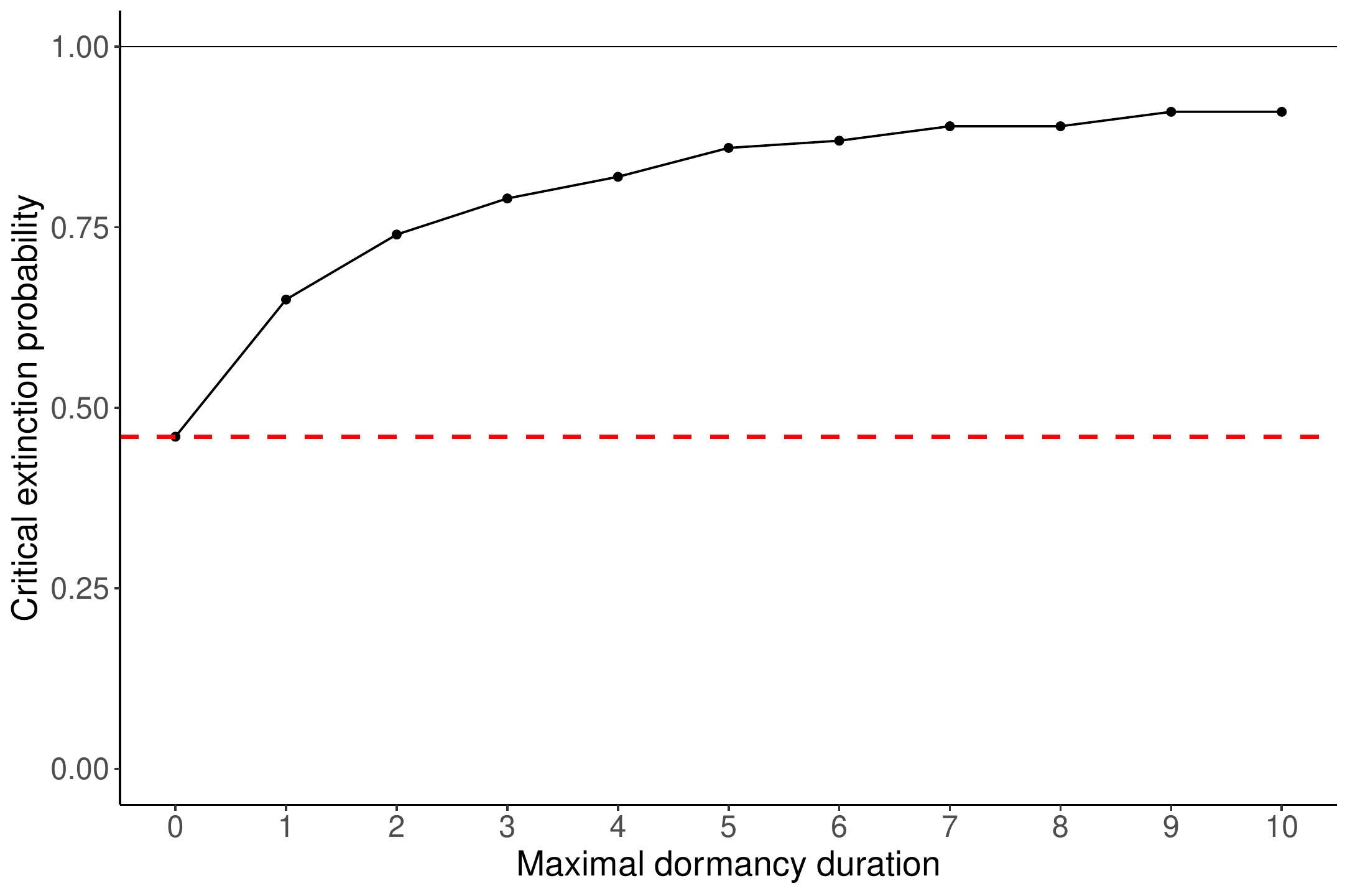}
\caption{Approximate value of the critical extinction probability $p_{crit}(H)$ as a function of the maximal dormancy duration $H$. The red dashed line indicates $p_{crit}(0)$, or in other words, the extinction probability above which (real) plants persistence without a seed bank is not possible. The continuous line indicates $p_{crit}(\infty) = 1$ (see Remark \ref{rem:limiting_case}). See the Appendix for details on the method used to compute $p_{crit}(H)$.}\label{fig:extinction_probability}
\end{figure}

\begin{sketch}
The main idea behind the proof is the following. First, we use the coupling between the $k$-parent WFSB metapopulation process and the BOA process introduced earlier. This coupling ensures that each occupied patch in the $k$-parent WFSB metapopulation process is reachable in the BOA process. We can then focus on the study of the simpler BOA process, since the fact that no sites are reachable in the BOA process will then guarantee that the $k$-parent WFSB metapopulation process has become empty. Using results from percolation theory, we obtain the existence of a critical extinction probability $p_{crit}(H)$ for the BOA process, such that:
\begin{itemize}
\item For all $p > p_{crit}(H)$, the BOA process goes extinct in finite time almost surely ;
\item For all $p < p_{crit}(H)$, the probability that the BOA process goes extinct in finite time is (strictly) less than $1$. 
\end{itemize}
We conclude by using the fact that the $k$-parent WFSB metapopulation process is embedded in the BOA process. 
\end{sketch}

The biological interpretation of this theorem is the following. For each maximal dormancy duration $H \in \nmath$, there exists a critical extinction probability $p_{crit}(H)$ above which any metapopulation evolving according to a $k$-parent WFSB metapopulation process of maximal dormancy duration $H$ will almost surely go extinct in finite time, no matter how quickly plants can invade a patch initially empty (which is quantified by $k$ and to a lesser extent $c$). In particular, no metapopulation without a seed bank can persist if the patch extinction probability is above $p_{crit}(0)$. $p_{crit}(H)$ is increasing with $H$, so the ability to form a seed bank can potentially allow population persistence and expansion in highly disturbed fragmented environments. See Figure \ref{fig:extinction_probability} for approximate values for $p_{crit}(H)$, computed using the method presented in the Appendix. Numerical simulations show the existence of parameter sets $(M,H,g,c,p)$ with $H > 0$ and $p > p_{crit}(0)$ for which population persistence is indeed possible (see Figure \ref{fig:invasion_persistence}). Since the $k$-parent occupancy process converges to the BOA process, which does not go extinct with positive probability if $p < p_{crit}(H)$, the critical extinction probability $p_{crit}(H)$ we obtain is optimal, in the sense that it is not possible to obtain a better upper bound on the critical extinction probability for the $k$-parent WFSB metapopulation process which depends only on $H$, and not also on one of the other parameters. 

\begin{rem}\label{rem:limiting_case}
In the limiting case $H = \infty$, seeds never expire, and the number of reachable patches in the BOA process cannot decrease, implying that $p_{crit}(\infty) = 1$. 
\end{rem}

\begin{figure}[ht]
\centering
\begin{subfigure}[b]{0.48\textwidth}
\includegraphics[width = \linewidth]{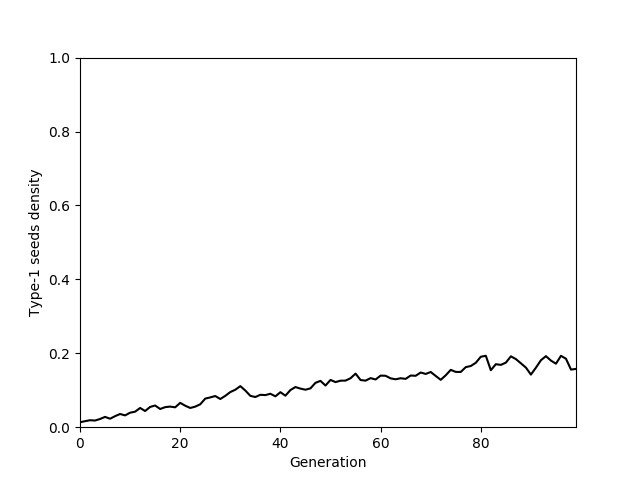}
\subcaption{p = 0.5 and H = 1}
\end{subfigure}
\hfill
\begin{subfigure}[b]{0.48\textwidth}
\includegraphics[width = \linewidth]{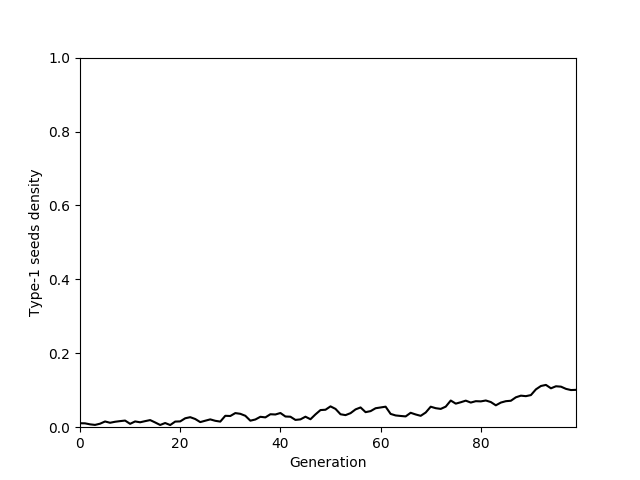}
\subcaption{p = 0.7 and H = 3}
\end{subfigure}
\caption{Plant metapopulation expansion for extinction probabilities $p_{crit}(0) < p < p_{crit}(H)$, and for a maximal dormancy duration $H \neq 0$. The values taken by the other parameters are $M = 100$, $g = 0.5$, $c = 0.05$ and $k = 25$. Initially, $5$ consecutive patches contained $gM = 50$ type $1$ seeds, and all the other seed bank compartments were empty. Since only the first $100$ generations were considered, the simulation was performed on a torus of $200$ patches, and the density of type $1$ seeds was computed over these $200$ patches.
}\label{fig:invasion_persistence}
\end{figure}

\section{Proof of the convergence of the $k$-parent occupancy process to the BOA process}\label{sec:cvg_to_BOA}
The goal of this section is to show that the $k$-parent occupancy process converges to the BOA process in the sense of Theorem \ref{thm:cvg_spom}, that is, when both $M \to + \infty$ and $k \to + \infty$, but with $k$ increasing "faster" than $M$. 
In order to do so, we first focus on the post-transition period, and take an initial condition allowing us to skip the transition period. Then, we explain how to adapt the proof to take into account more general initial conditions. 

First, we set some notation. We use the notation from Theorem \ref{thm:cvg_spom} throughout this section. For all $M \geq 2$ and $h \in \nmath$, let $(\xi^{(M),n},h^{(M),n})_{n \in \nmath}$ be the $\lceil M^{\alpha} \rceil$-parent WFSB metapopulation process with parameters $(M,H,g,c,p)$ associated to the $\lceil M^{\alpha} \rceil$-parent occupancy process $(O^{(M),n},h^{(M),n})_{n \in \nmath}$. For every $n \in \nmath$, $i \in \zmath$ and $h \in \nmath$, let
\begin{equation*}
E_{i,h}^{(M),n} := \{
j \in \llbracket 1,M \rrbracket : h_{i,j}^{(M),n} = h
\}
\end{equation*}
be the set of all seed bank compartments in patch $i$ containing seeds of age $h$ at the beginning of generation $n$, and let $G_{i}^{(M),n}$ be the set of all seed bank compartments in patch $i$ which contain the seeds germinating at the beginning of generation $n$. 

Let $(a_{n})_{n \in \nmath}$ be the sequence defined by
\begin{equation*}
a_{0} = 0 \text{ and } \forall n \in \nmath, a_{n+1} = 3a_{n}+1,
\end{equation*}
and for all $0 < \epsilon < 1$, let 
\begin{equation*}
W_{h}^{(M),\epsilon} := \left[
\lfloor gM \rfloor \left(\left(
1-\frac{\lfloor gM \rfloor}{M}
\right)^{h} -\epsilon a_{h}\right),
\lfloor gM \rfloor \left(\left(
1-\frac{\lfloor gM \rfloor}{M}
\right)^{h} +\epsilon a_{h}\right)
\right].
\end{equation*}
Recalling that $(O^{(M),\infty,n},h^{(M),\infty,n})_{n \in \nmath}$ is the BOA process associated to $(\xi^{(M),n},h^{(M),n})_{n \in \nmath}$, we introduce the event
\begin{equation*}
\dboa_{i}^{(M),n} := \left\{
O_{i}^{(M),n} \neq O_{i}^{(M),\infty,n} \text{ or } h_{i}^{(M),n} \neq h_{i}^{(M),\infty,n}
\right\}.
\end{equation*}
Given the initial condition for the associated BOA process, we already know that for $n = 0$ the BOA process associated to $(\xi^{(M),n},h^{(M),n})_{n \in \nmath}$ is equal to the occupancy process associated to the same process. Therefore, 
\begin{equation*}
\proba\left(\bigcap_{i \in \zmath}\overline{\dboa_{i}}^{(M),0}\right) = 1. 
\end{equation*}
Moreover, as only neighbouring sites can send colonizing seeds, if type~$1$ seeds are initially in a finite number of patches, then it is also the case after any arbitrary finite duration. More specifically, if we set
\begin{align*}
i_{min}^{0} &:= \min\{i \in \zmath : O_{i}^{\infty} = 1\} 
\text{ and } i_{max}^{0} := \max\{i \in \zmath : O_{i}^{\infty} = 1\}, \\
\intertext{and if for all $n \in \nmath$, we set}
i_{min}^{n+1} &:= i_{min}^{n}-1 
\text{ and } i_{max}^{n+1} := i_{max}^{n}+1,
\end{align*}
then the only patches which can potentially contain type~$1$ seeds after $n$ generations are the patches with index $i \in \llbracket i_{min}^{n},i_{max}^{n} \rrbracket$. In other words, for all $M \geq 2$, $n \in \nmath$ and $i \in \zmath \backslash \llbracket i_{min}^{n},i_{max}^{n} \rrbracket$, 
we have
$O_{i}^{(M),n} = O_{i}^{(M),\infty,n} = 0 \text{ and }h_{i}^{(M),n} = h_{i}^{\infty,n}$, 
so that
$\overline{\dboa}_{i}^{(M),n}$ holds a.s.
Therefore, for all $N \in \nmath$, 
\begin{equation}\label{eqn:equality_theorem_2}
\proba\left(
\bigcup_{n = 0}^{N} \bigcup_{i \in \zmath} \dboa_{i}^{(M),n}
\right)
= \proba\left(
\bigcup_{n = 1}^{N} \bigcup_{i \in \llbracket i_{min}^{n},i_{max}^{n} \rrbracket} \dboa_{i}^{(M),n}
\right).
\end{equation}
In all that follows, in order to ease notation, for all $n \in \nmath$, we set $I_{n} = \llbracket i_{min}^{n},i_{max}^{n} \rrbracket$

Let $M \geq 2$ such that $\lfloor gM \rfloor > 1$, and $N \geq 1$. We recall that deviations from the BOA process can occur in the following cases.
\begin{enumerate}
\item Type $1$ plants are present in a patch, but are never chosen as potential parents.
\item The seed bank contains non-expired type $1$ seeds, but none of them germinate during the generation we consider. 
\item Some type $1$ seeds did enter the seed bank less than $H+1$ generations ago, but all of them already germinated, and the seed bank is now empty. 
\end{enumerate}

If the initial condition $(\xi^{(M),0},h^{(M),0})$ satisfies the extra hypothesis:
\begin{itemize}
\item (IC1) All the viable seeds of the same age and in the same patch are of the same type, or in formula,
\begin{equation*}
\forall i \in \zmath, \forall j_{1}, j_{2} \in \llbracket 1,M \rrbracket, h_{i,j_{1}}^{(M),0} = h_{i,j_{2}}^{(M),0} \leq H \Longrightarrow \xi_{i,j_{1}}^{(M),0} = \xi_{i,j_{2}}^{(M),0};
\end{equation*}
\item (IC2) For all $h \in \llbracket 0,H \rrbracket$, the proportion of age $h$ seeds is roughly equal to $g(1-g)^{h}$, in the sense that
\begin{equation*}
\Card(E_{i,h}^{(M),0}) \in W_{h}^{(M),\epsilon}
\end{equation*}
\end{itemize}
for $\epsilon$ appropriately chosen, then the three deviation cases are covered by the following more general cases:
\begin{itemize}
\item (D1) There exists $n \in \nmath$ and $i \in I_{n+1}$ such that one of the plants in patches $\{i-1, i, i+1\}$ is not chosen as a potential parent by at least one seed bank compartment in patch $i$ during generation $n$.
\item (D2) There exists $n \in \nmath$ and $i \in I_{n}$ such that $h_{i}^{(M),n} \leq H$ but no seed of age $h_{i}^{(M),n}$ germinates during generation~$n$. 
\item (D3) There exists $n \in \nmath$ and $i \in I_{n}$ such that $h_{i}^{(M),n} \leq H$ but $\Card\left(E_{i,h_{i}^{(M),n}}^{(M),n}\right) \notin W_{h_{i}^{(M),n}}^{(M),\epsilon}$.
\end{itemize}
Indeed, initially, all the viable seeds of the same age have the same type, and this stays true until (D1) occurs. If (D1) did not occur yet before generation~$n$, then for all $i \in I_{n}$, all seeds of age $h_{i}^{(M),n}$ are of the same type. They are not necessarily type~$1$ seeds, but if the patch is not empty, then they are all type~$1$ seeds. Therefore, if the patch is not empty and if $\epsilon$ is such that
\begin{equation*}
\lgmr \left(\left(
1 - \lgmr M^{-1}
\right)^{h_{i}^{(M),n}} - \epsilon a_{h_{i}^{(M),n}}\right) > 1,
\end{equation*}
then
\begin{equation*}
\Card\left(E_{i,h_{i}^{(M),n}}^{(M),n}\right) \in W_{h_{i}^{(M),n}}^{(M),\epsilon}
\end{equation*}
implies that the patch contains type~$1$ seeds which entered less than $H+1$ generations ago and did not germinate yet, and 
\begin{equation*}
\Card\left(
E_{i,h_{i}^{(M),n}}^{(M),n} \cap G_{i}^{(M),n}
\right) > 0
\end{equation*}
implies that at least one type~$1$ seed germinates during generation~$n$. 

Formally, let $\epsilon > 0$ be such that for all $M \geq 2$ such that  
$\left(1-\lgmr M^{-1}\right)^{H}-\lgmr^{-1} > 0$, 
\begin{align*}
\epsilon &< a_{H}^{-1}\left(\left(
1 - \lgmr M^{-1}
\right)^{H}-\lgmr^{-1}\right). 
\end{align*}
For all $n \in \nmath$ and $i \in \zmath$, we define the following events corresponding respectively to (D1), (D2) and (D3):
\begin{align*}
\errpar_{i}^{(M),n} &:= \left\{\text{There exists a plant in patches }\{i-1,i,i+1\}\text{ which is not chosen as a} \right. \\
& \quad \quad \text{ potential parent by at least one seed bank compartment in patch i during} \\
& \quad \quad \left.\text{ generation n}\right\}, \\
\errg_{i}^{(M),n} &:= \left\{h_{i}^{(M),n} \leq H\text{ and }\Card\left(
E_{i,h_{i}^{(M),n}}^{(M),n} \cap G_{i}^{(M),n}
\right) = 0\right\}, \\
\text{and } \quad  \errf_{i}^{(M),n} &:= \left\{h_{i}^{(M),n} \leq H\text{ and }\Card\left(
E_{i,h_{i}^{(M),n}}^{(M),n}
\right) \notin W_{h_{i}^{(M),n}}^{(M),\epsilon}\right\}.
\end{align*}
We assume that the initial condition $(\xi^{(M),0}, h^{(M),0})$ satisfies conditions (IC1) and (IC2). Then, 
\begin{equation}\label{eqn:equality_theorem_1}
\proba\left(
\bigcup_{n = 1}^{N} \bigcup_{i \in I_{n}} \dboa_{i}^{n} 
\right) \leq 
\proba \left(
\bigcup_{n = 0}^{N-1} \bigcup_{i \in I_{n+1}} 
\errpar_{i}^{(M),n} \cup \errg_{i}^{(M),n} \cup \errf_{i}^{(M),n+1}
\right).
\end{equation}

\begin{rem}
If $(\xi^{(M),0},h^{(M),0})$ does not satisfy condition (IC1) or (IC2), then we show at the end of the section that after a transition period of $H+1$ generations, $(\xi^{(M),H+1},h^{(M),H+1})$ satisfies conditions (IC1) and (IC2) with high probability. We can then restart the process from $(\xi^{(M),H+1},h^{(M),H+1})$ and conclude. 
\end{rem}

In order to shorten the notation, let $\errgf_{i}^{(M),n}$ be the event defined as
\begin{equation*}
\errgf_{i}^{(M),n} :=  \errg_{i}^{(M),n} \cup \errf_{i}^{(M),n+1}.
\end{equation*}
Since the choice of the potential parents does not depend on their types and is independent from one patch (or generation) to another, we have
\begin{align}
&\proba \left(
\bigcup_{n = 0}^{N-1} \bigcup_{i \in I_{n+1}} 
\errpar_{i}^{(M),n} \cup \errgf_{i}^{(M),n}
\right) \nonumber \\ 
&\leq \sum_{n = 0}^{N-1} \sum_{i \in I_{n+1}}
\proba\left(
\errpar_{i}^{(M),n}
\right) + \proba\left(\left.
\bigcup_{n = 0}^{N-1} \bigcup_{i \in I_{n+1} }
\errgf_{i}^{(M),n} \right|
\left(\overline{\errpar}_{i}^{(M),n'}\right)_{\substack{0 \leq n' \leq N-1 \\ i \in I_{n'+1}}} 
\right) \\ \nonumber
&\leq \sum_{n = 0}^{N-1} \left(
i_{max}^{0} - i_{min}^{0} +1 +2(n+1)
\right)\proba\left(\errpar_{i_{min}^{0}}^{(M),0}\right) 
 + \proba\left(
\left. \bigcup_{i \in I_{1}} \errgf_{i}^{(M),0} \right|
\left(\overline{\errpar}_{i}^{(M),n'}\right)_{\substack{0 \leq n' \leq N-1 \\ i \in I_{n'+1}}}
\right) \\ \nonumber
& \qquad + \sum_{n = 1}^{N-1} \proba\left(
\left. \bigcup_{i \in I_{n+1}} \errgf_{i}^{(M),n}  \right|
\left(
\overline{\errpar}_{i}^{(M),n'} \right)_{\substack{0 \leq n' \leq N-1 \\ i \in I_{n'+1}}}
\cap \left(
\overline{\errgf}_{i}^{(M),n'} 
\right)_{\substack{0 \leq n' \leq n-1 \\ i \in I_{n'+1}}}
\right) \\
&\leq \left[
N\left(i_{max}^{0}-i_{min}^{0}+1\right) + N(N+1)
\right] \proba\left(\errpar_{i_{min}^{0}}^{(M),0}\right) \label{eqn:terme_moche_1} \\
& \qquad + \proba\left(
\left. \bigcup_{i \in I_{1}} \errgf_{i}^{(M),0} \right|
\left(\overline{\errpar}_{i}^{(M),n'}\right)_{\substack{0 \leq n' \leq N-1 \\ i \in I_{n'+1}}}
\right) \label{eqn:terme_moche_2}\\ 
& \qquad + \sum_{n = 1}^{N} \proba\left(
\left. \bigcup_{i \in I_{n+1}} \errgf_{i}^{(M),n}  \right|
\left(
\overline{\errpar}_{i}^{(M),n'} \right)_{\substack{0 \leq n' \leq N-1 \\ i \in I_{n'+1}}}
\cap \left(
\overline{\errgf}_{i}^{(M),n'} 
\right)_{\substack{0 \leq n' \leq n-1 \\ i \in I_{n'+1}}}
\right). \label{eqn:terme_moche_3}
\end{align} 
Bounding the quantity in (\ref{eqn:terme_moche_1})  from above is straightforward, and is carried out in Section~\ref{sec:section_2_1}. The main obstacle to the study of the two other terms stems from the fact that all the events considered are linked together by $(h^{(M),n})_{n \in \nmath}$. However, we can circumvent this problem by working conditionally on $h^{(M),n}$. Indeed, given $h_{i}^{(M),n}$, since $h_{i}^{(M),n+1}$ can only be equal to $0$ or $h_{i}^{(M),n}+1$,
\begin{align*}
\errf_{i}^{n+1} &:= \left\{
\Card\left(
E_{i,h_{i}^{(M),n+1}}^{(M),n+1}
\right) \notin W_{h_{i}^{(M),n+1}}^{(M),\epsilon} \text{ and } h_{i}^{(M),n+1} \leq H
\right\} \\
&\subseteq \left\{
\Card\left(
E_{i,0}^{(M),n+1}
\right) \notin W_{0}^{(M),\epsilon} \text{ and } 0 \leq H
\right\} \\
& \quad 
\cup
\left\{
\Card\left(
E_{i,h_{i}^{(M),n}+1}^{(M),n+1}
\right) \notin W_{h_{i}^{(M),n}+1}^{(M),\epsilon} \text{ and } h_{i}^{(M),n}+1 \leq H
\right\} \\
&= \left\{
\Card\left(
E_{i,h_{i}^{(M),n}+1}^{(M),n+1}
\right) \notin W_{h_{i}^{(M),n}+1}^{(M),\epsilon} \text{ and } h_{i}^{(M),n}+1 \leq H
\right\}
\end{align*}
since $W_{0}^{(M),\epsilon} = \{\lfloor gM \rfloor\} = \{\Card(E_{i,0}^{(M),n+1})\}$. 
Therefore, 
\begin{equation} \label{eqn:reecriture_event_conditional}
\errgf_{i}^{n} \subseteq \{h_{i}^{(M),n} \leq H\} \cap \left(
\left\{
\Card\left(
E_{i,h_{i}^{(M),n}}^{(M),n}
\cap G_{i}^{(M),n} \right) = 0
\right\}
\cup
\left\{
\mathrm{Card}\left(
E_{i,h_{i}^{(M),n}+1}^{(M),n+1}
\right) \notin W_{h_{i}^{(M),n}+1}^{(M),\epsilon}
\right\}
\right).
\end{equation}
In order to use (\ref{eqn:reecriture_event_conditional}), in Section~\ref{sec:section_2_2}, we establish an upper bound on 
\begin{equation}\label{proba:truc_moche_2_et_3}
\proba\left(\left.\left\{
\Card\left(E_{i,h}^{(M),n} \cap G_{i}^{(M),n}\right) = 0
\right\} \cup 
\left\{
\Card\left(E_{i,h+1}^{(M),n+1}\right) \notin W_{h+1}^{(M),\epsilon}
\right\}
\right|\left\{
\Card\left(E_{i,h}^{(M),n}\right) \in W_{h}^{(M),\epsilon}
\right\}
\right)
\end{equation}
for all $h \in \llbracket 0,H \rrbracket$, which does not depend on the value of $h$. In Section~\ref{sec:section_2_3}, we use it, combined with the upper bound on (\ref{eqn:terme_moche_1}) from Section~\ref{sec:section_2_1}, in order to complete the proof of Theorem~\ref{thm:cvg_spom}. 

\subsection{Upper bound on $\proba(\mathrm{Par}^{(M),n}_{i})$}\label{sec:section_2_1}
We set $c^{*} = \min(c,1-2c)$. 
The goal of this section is to show the following lemma.

\begin{lem}\label{lem:maj_p_rm}
For all $M \geq 2$, $n \in \nmath$ and $i \in \zmath$,
\begin{equation*}
\proba\left(\mathrm{Par}^{(M),n}_{i}\right) \leq 3g^{2}M^{2}
\exp\left(
M^{\alpha} \ln\left(
1 - \frac{c^{*}}{gM}
\right)\right).
\end{equation*}
\end{lem}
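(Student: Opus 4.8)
The plan is to bound $\proba(\errpar_i^{(M),n})$ by a union bound over the (at most $3\lfloor gM\rfloor \le 3gM$) plants in patches $\{i-1,i,i+1\}$ and the (at most $\lfloor gM\rfloor \le gM$) seed bank compartments to be refilled in patch $i$, of the probability that one fixed plant is \emph{not} chosen as a potential parent by one fixed compartment. So the main work is: for a fixed target compartment in patch $i$ and a fixed plant located in one of the three patches $\{i-1,i,i+1\}$, estimate the probability that none of the $k=\lceil M^\alpha\rceil$ potential parents sampled for that compartment equals that plant.

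First I would recall the sampling mechanism from the definition of the $k$-parent WFSB process: each of the $k$ potential parents $C_{l,i,j}$ independently picks patch $i$ with probability $1-2c$, patch $i-1$ with probability $c$, patch $i+1$ with probability $c$, and then picks uniformly one of the $\lfloor gM\rfloor$ germinating compartments in that patch. Hence a fixed germinating plant in patch $i$ is picked by a single draw with probability $(1-2c)/\lfloor gM\rfloor$, and a fixed germinating plant in patch $i\pm1$ with probability $c/\lfloor gM\rfloor$; in either case this is at least $c^{*}/\lfloor gM\rfloor \ge c^{*}/(gM)$, where $c^{*}=\min(c,1-2c)$. Since the $k$ draws are i.i.d., the probability that a fixed plant is missed by \emph{all} $k$ draws of a fixed compartment is at most
\begin{equation*}
\left(1-\frac{c^{*}}{gM}\right)^{\!k} \le \left(1-\frac{c^{*}}{gM}\right)^{\!M^{\alpha}} = \exp\!\left(M^{\alpha}\ln\!\left(1-\frac{c^{*}}{gM}\right)\right),
\end{equation*}
using $k=\lceil M^{\alpha}\rceil \ge M^{\alpha}$ and $\ln(1-c^*/(gM))<0$.

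Then a union bound over the at most $3\lfloor gM\rfloor \le 3gM$ plants and the at most $\lfloor gM\rfloor \le gM$ target compartments gives the factor $3g^{2}M^{2}$, yielding exactly the claimed inequality. The only minor points to check carefully are: (i) that $\errpar_i^{(M),n}$ as defined really is contained in the union of these ``missed plant/compartment'' events — this is immediate from its definition; (ii) that we may replace $\lfloor gM\rfloor$ by $gM$ in the denominator to go from $c^*/\lfloor gM\rfloor$ to $c^*/(gM)$, which only weakens the per-draw lower bound and hence is valid; and (iii) that $k\ge M^{\alpha}$. I do not expect any real obstacle here: the lemma is a routine first-moment/union-bound estimate, and the exponential shape comes directly from the i.i.d. structure of the $k$ potential-parent draws. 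The slightly delicate bookkeeping is just making sure the crude bounds $\lfloor gM\rfloor\le gM$ are used in the direction that preserves the inequality.
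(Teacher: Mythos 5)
Your proof is correct and follows essentially the same route as the paper: a union bound over the at most $3\lfloor gM\rfloor^{2}\leq 3g^{2}M^{2}$ (plant, compartment) pairs, combined with the observation that each of the $\lceil M^{\alpha}\rceil\geq M^{\alpha}$ i.i.d.\ draws selects a fixed plant with probability at least $c^{*}/\lfloor gM\rfloor\geq c^{*}/(gM)$. The paper merely packages the worst-case pair into a single auxiliary event before multiplying by the number of pairs, which is the same argument.
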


A direct consequence of this lemma is the fact that since $\alpha > 1$,
\begin{equation*}
\proba\left(\mathrm{Par}^{(M),n}_{i}\right) \xrightarrow[M \to + \infty]{} 0.
\end{equation*}

\begin{proof}
Assume that $c^{*} = c$. 
Let $\widetilde{\mathrm{Par}}^{(M),n}_{i}$ be the event: $\{$"The first seed which germinated in patch $i+1$ was not chosen as a potential parent by the first seed bank compartment in patch $i$ to be refilled"$\}$. Then,
\begin{equation*}
\proba\left(\mathrm{Par}^{(M),n}_{i}\right) \leq 3 \lgmr^{2} \proba\left(\widetilde{\mathrm{Par}}^{(M),n}_{i}\right).
\end{equation*}
Indeed, $\mathrm{Par}^{(M),n}_{i}$ is the event $\{$"at least one plant in one of the patches $\{i-1$, $i$, $i+1\}$ is not chosen as a potential parent in order to refill at least one seed bank compartment in patch $i$"$\}$. There exists $3 \lgmr ^{2}$ pairs "plant not chosen in patch $i-1$, $i$ or $i+1$/seed bank compartment in patch $i$", and as $c^{*} = c$, plants in patches $i-1$ and $i+1$ have less chances of being chosen as potential parents than plants in patch $i$.

Then, each one of the $\lceil M^{\alpha} \rceil$ potential parents chosen to refill the first seed bank compartment in patch $i$ is \textit{not} the first plant of patch $i+1$ with probability
\begin{equation*}
1 - \frac{c^{*}}{\lgmr} \leq 1 - \frac{c^{*}}{gM}.
\end{equation*}
Hence, 
\begin{align*}
\proba\left(\widetilde{\mathrm{Par}}^{(M),n}_{i}\right) &\leq \left(
1 - \frac{c^{*}}{gM}
\right)^{\lceil M^{\alpha} \rceil} \\
&\leq \left(
1 - \frac{c^{*}}{gM}
\right)^{M^{\alpha}} \\
\text{and } \qquad \proba\left(\mathrm{Par}^{(M),n}_{i}\right) &\leq 
3g^{2}M^{2}
\exp\left(
M^{\alpha} \mathrm{ln}
\left(
1 - \frac{c^{*}}{gM}
\right)\right).
\end{align*}
If $c^{*} \neq c$, then we can directly adapt this proof defining instead the event $\widetilde{\mathrm{Par}}^{(M),n}_{i}$ as the event $\{$"The first seed which germinated in the patch $i$ (instead of the patch $i+1$) was not chosen as a potential parent by the first seed bank compartment in patch $i$ to be refilled"$\}$. 
\end{proof}

\subsection{Upper bound on (\ref{proba:truc_moche_2_et_3})}\label{sec:section_2_2}
In this section, we show an upper bound on 
\begin{equation*}
\proba\left(\left.\left\{
\Card\left(E_{i,h}^{(M),n} \cap G_{i}^{(M),n}\right) = 0
\right\} \cup 
\left\{
\Card\left(E_{i,h+1}^{(M),n+1}\right) \notin W_{h+1}^{(M),\epsilon}
\right\}
\right|\left\{
\Card\left(E_{i,h}^{(M),n}\right) \in W_{h}^{(M),\epsilon}
\right\}
\right)
\end{equation*}
for all $h \in \llbracket 0,H \rrbracket$. In other words, we study the probability that if the number of age $h$ seeds in patch $i$ at the beginning of generation $n$ is roughly equal to $g(1-g)^{h}M$:
\begin{itemize}
\item either no age $h$ seeds germinate during generation $n$
\item or the number of remaining age $h$ seeds (which become age~$h+1$ seeds in the subsequent generation) is significantly different from $g(1-g)^{h+1}M$. 
\end{itemize}
In order to do so, we first observe that if $E \subseteq \llbracket 1,M \rrbracket$ is a non-empty strict subset of $\llbracket 1,M \rrbracket$, then for all $n \in \nmath$ and $i \in \zmath$, $\Card(E \cap G_{i}^{(M),n})$ follows an hypergeometric law. Using the tail inequalities~(10) and (14) from \cite{skala2013hypergeometric} yields the following lemma. 

\begin{lem}\label{lem:hypergeometric_law} 
Let $M \geq 2$ be such that  
$\left(1-\lgmr M^{-1}\right)^{H}-\lgmr^{-1} > 0$.
Let $E \subseteq \llbracket 1,M \rrbracket$ be a non-empty strict subset of $\llbracket 1,M \rrbracket$. Then, for all $n \in \nmath$, $i \in \zmath$ and $0 < \epsilon < 1$, 
\begin{align*}
&\proba\left(
\Card(E \cap G_{i}^{(M),n}) \geq \Card(E) (1+\epsilon) \frac{\lfloor gM \rfloor}{M}
\right) \leq e^{-2\epsilon^{2} \Card(E)^{2} \lfloor gM \rfloor M^{-2}} \\
\text{and } \quad &\proba\left(\Card(E \cap G_{i}^{(M),n}) \leq \Card(E) (1-\epsilon) \frac{\lfloor gM \rfloor}{M}
\right) \leq e^{-2\epsilon^{2} \Card(E)^{2} \lfloor gM \rfloor M^{-2}}
\end{align*}
\end{lem}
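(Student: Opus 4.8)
The plan is to reduce the statement to a routine application of the two-sided hypergeometric tail bounds, stated just above as inequalities (10) and (14) of \cite{skala2013hypergeometric}. First I would observe that, for a fixed set of compartments $E \subseteq \llbracket 1,M \rrbracket$ of size $K := \Card(E)$, the germinating set $G_{i}^{(M),n}$ is a uniformly random $\lfloor gM \rfloor$-subset of $\llbracket 1,M \rrbracket$, chosen independently of everything that determines the ages of the seeds (germination picks compartments, not types). Hence $\Card(E \cap G_{i}^{(M),n})$ is hypergeometric with population size $M$, $K$ ``successes'' in the population, and $\lfloor gM \rfloor$ draws; its mean is exactly $K \lfloor gM \rfloor / M$. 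This is the only probabilistic input needed.

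Next I would quote the Hoeffding-type tail inequalities for the hypergeometric distribution from \cite{skala2013hypergeometric}: if $X$ is hypergeometric with mean $\mu = K \lfloor gM \rfloor / M$ and $t > 0$, then
\begin{equation*}
\proba\left(X \geq \mu + t\right) \leq e^{-2 t^{2} / \lfloor gM \rfloor}
\qquad\text{and}\qquad
\proba\left(X \leq \mu - t\right) \leq e^{-2 t^{2} / \lfloor gM \rfloor}.
\end{equation*}
(These are precisely the multiplicative-deviation forms (10) and (14) cited, specialised to the number of draws $n = \lfloor gM \rfloor$.) Substituting $t = \epsilon \mu = \epsilon K \lfloor gM \rfloor / M$ turns the exponent $-2t^{2}/\lfloor gM \rfloor$ into $-2\epsilon^{2} K^{2} \lfloor gM \rfloor M^{-2}$, which is exactly the bound claimed in the lemma for each of the two tails.

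I would then check the hypotheses line up: $E$ non-empty and strict ensures $1 \leq K \leq M-1$, so the hypergeometric law is non-degenerate and the draws $\lfloor gM \rfloor$ satisfy $1 \leq \lfloor gM \rfloor \leq M$ (the assumption $\lfloor gM \rfloor \geq 1$ is standing, and $\lfloor gM \rfloor \leq M$ is automatic); the extra condition $(1 - \lfloor gM \rfloor M^{-1})^{H} - \lfloor gM \rfloor^{-1} > 0$ is not actually needed for this estimate but is carried along because the same $M$ is used elsewhere in the section, so I would simply keep it in the statement without invoking it. Finally I would note that $n$ and $i$ play no role beyond indexing an independent germination draw, which is why the bound is uniform in them, and conclude. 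The only mild subtlety — and the ``hardest'' point, though it is really just bookkeeping — is to make sure the direction of the inequality in the cited source matches the direction here and that the specialisation of their general $n$-draw bound to $n = \lfloor gM \rfloor$ is done correctly; once that is pinned down, the proof is a one-line substitution.
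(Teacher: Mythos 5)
Your proposal is correct and is exactly the argument the paper intends: the paper itself only remarks that $\Card(E \cap G_{i}^{(M),n})$ is hypergeometric (population $M$, $\Card(E)$ successes, $\lfloor gM \rfloor$ draws) and invokes inequalities (10) and (14) of the cited reference, and your substitution $t = \epsilon \Card(E)\lfloor gM \rfloor/M$ into the $e^{-2t^{2}/\lfloor gM\rfloor}$ bound reproduces the stated exponent. Your side remarks — that $G_{i}^{(M),n}$ is a fresh uniform $\lfloor gM\rfloor$-subset independent of $n$ and $i$, and that the hypothesis $(1-\lfloor gM\rfloor M^{-1})^{H}-\lfloor gM\rfloor^{-1}>0$ is carried along but not used here — are also accurate.
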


Using this lemma, we can obtain the following upper bound. 
\begin{lem}\label{lem:distr_age_of_seeds}
For all $M \geq 2$ such that  
$\left(1-\lgmr M^{-1}\right)^{H}-\lgmr^{-1} > 0$, for all $n \in \nmath$, $i \in \zmath$ and $h \in \llbracket 0,H \rrbracket$, 
\begin{align*}
&\proba\left(\left.\left\{
\Card\left(E_{i,h}^{(M),n} \cap G_{i}^{(M),n}\right) = 0
\right\} \cup 
\left\{
\Card\left(E_{i,h+1}^{(M),n+1}\right) \notin W_{h+1}^{(M),\epsilon}
\right\}
\right|\left\{
\Card\left(E_{i,h}^{(M),n}\right) \in W_{h}^{(M),\epsilon}
\right\}
\right) \\
&\leq 2e^{-2\epsilon^{2} \lfloor gM \rfloor^{3} ((1-\lfloor gM \rfloor M^{-1})^{H}-\epsilon a_{H})^{2} M^{-2}}.
\end{align*}
\end{lem}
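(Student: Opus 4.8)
The plan is to condition on the event $\{\Card(E_{i,h}^{(M),n}) \in W_h^{(M),\epsilon}\}$ and then control each of the two events in the union separately, using Lemma~\ref{lem:hypergeometric_law} applied to the set $E = E_{i,h}^{(M),n}$ (which, conditionally, is a fixed non-empty strict subset of $\llbracket 1,M\rrbracket$ whose cardinality lies in $W_h^{(M),\epsilon}$). Write $e := \Card(E_{i,h}^{(M),n})$, so that $e \geq \lfloor gM\rfloor\big((1-\lfloor gM\rfloor M^{-1})^{h} - \epsilon a_h\big) \geq \lfloor gM\rfloor\big((1-\lfloor gM\rfloor M^{-1})^{H} - \epsilon a_H\big)$, the last inequality because $(1-\lfloor gM\rfloor M^{-1})^{h}$ decreases in $h$ and $a_h$ increases in $h$, with $h \leq H$. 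By the choice of $\epsilon$ made earlier in the section, this lower bound on $e$ is $> 1$, in particular $E_{i,h}^{(M),n}$ is non-empty and a strict subset of $\llbracket 1, M\rrbracket$ (it is strict since the $\lfloor gM\rfloor$ germinating compartments have age reset to $0 \neq h$ in the next generation, or more simply since not all $M$ seeds can have the same positive age given condition (IC2)), so Lemma~\ref{lem:hypergeometric_law} applies.

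First I would handle $\errg$: the event $\{\Card(E_{i,h}^{(M),n} \cap G_i^{(M),n}) = 0\}$ is contained in $\{\Card(E \cap G_i^{(M),n}) \leq e(1-\epsilon)\lfloor gM\rfloor M^{-1}\}$ because $e(1-\epsilon)\lfloor gM\rfloor M^{-1} \geq e(1-\epsilon)\lfloor gM\rfloor M^{-1} > 0$ (again using $e > 1$ and $\lfloor gM\rfloor \geq 1$, $\epsilon < 1$). Hence by the second tail bound of Lemma~\ref{lem:hypergeometric_law}, $\proba(\errg \mid \cdot) \leq e^{-2\epsilon^2 e^2 \lfloor gM\rfloor M^{-2}}$. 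Second, for $\errf_i^{(M),n+1}$ restricted to the branch $h_i^{(M),n+1} = h+1$ (the only relevant branch, by the rewriting (2.x) already established), note that the age-$(h+1)$ seeds at generation $n+1$ are exactly the age-$h$ seeds at generation $n$ that did \emph{not} germinate, so $\Card(E_{i,h+1}^{(M),n+1}) = e - \Card(E \cap G_i^{(M),n})$. Since $e \in W_h^{(M),\epsilon}$ and we want $e - \Card(E \cap G_i^{(M),n}) \in W_{h+1}^{(M),\epsilon}$, a short computation using the recursion $a_{h+1} = 3a_h + 1$ (which is exactly engineered so that the error windows nest correctly under one multiplication by $(1-\lfloor gM\rfloor M^{-1})$ and one hypergeometric fluctuation of relative size $\epsilon$) shows that $\Card(E \cap G_i^{(M),n}) \in [e(1-\epsilon)\lfloor gM\rfloor M^{-1}, e(1+\epsilon)\lfloor gM\rfloor M^{-1}]$ implies $e - \Card(E\cap G_i^{(M),n}) \in W_{h+1}^{(M),\epsilon}$; I would present this as the key deterministic inclusion. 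Consequently the bad event for $\errf$ is contained in the union of the two tail events of Lemma~\ref{lem:hypergeometric_law}, each of probability $\leq e^{-2\epsilon^2 e^2\lfloor gM\rfloor M^{-2}}$.

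Combining, the conditional probability of $\errg \cup \errf$ is at most $2 e^{-2\epsilon^2 e^2 \lfloor gM\rfloor M^{-2}}$, and since $e \geq \lfloor gM\rfloor\big((1-\lfloor gM\rfloor M^{-1})^{H} - \epsilon a_H\big) > 0$ we may replace $e^2$ by this lower bound in the exponent (the exponential is increasing in $e^2$ on the relevant range, and we are upper-bounding), giving exactly
\[
2e^{-2\epsilon^{2}\lfloor gM\rfloor^{3}((1-\lfloor gM\rfloor M^{-1})^{H}-\epsilon a_H)^{2}M^{-2}},
\]
as claimed; one then removes the conditioning by taking the conditional bound uniformly over all realizations of $E_{i,h}^{(M),n}$ with cardinality in $W_h^{(M),\epsilon}$ and over the (conditionally irrelevant) randomness of the previous generations. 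The main obstacle I expect is the bookkeeping in the deterministic inclusion step: one has to check carefully that the window $W_{h+1}^{(M),\epsilon}$ is wide enough to absorb both the multiplicative shrinking by a factor close to $(1-\lfloor gM\rfloor M^{-1})$ coming from "$e$ minus the germinating fraction" \emph{and} the $\pm\epsilon e \lfloor gM\rfloor M^{-1}$ hypergeometric fluctuation, and that this is precisely why $a_{h+1} = 3a_h+1$ rather than, say, $a_h + 1$; getting the constants to line up so that the final exponent is exactly the stated one (rather than something with an extra constant factor) is the only delicate point, and it is purely algebraic.
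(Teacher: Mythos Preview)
Your proposal is correct and follows essentially the same route as the paper's proof: condition on $\Card(E_{i,h}^{(M),n})\in W_h^{(M),\epsilon}$, apply the two hypergeometric tail bounds of Lemma~\ref{lem:hypergeometric_law} to $E=E_{i,h}^{(M),n}$, verify via the recursion $a_{h+1}=3a_h+1$ that $\Card(E\cap G_i^{(M),n})\in[e(1-\epsilon)\lfloor gM\rfloor/M,\,e(1+\epsilon)\lfloor gM\rfloor/M]$ forces $e-\Card(E\cap G_i^{(M),n})\in W_{h+1}^{(M),\epsilon}$, and finally replace $e$ by its lower bound $\lfloor gM\rfloor\big((1-\lfloor gM\rfloor M^{-1})^{H}-\epsilon a_H\big)$ in the exponent. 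One small remark: the parenthetical about ``the branch $h_i^{(M),n+1}=h+1$'' is unnecessary here, since the lemma as stated concerns $\Card(E_{i,h+1}^{(M),n+1})$ directly and the identity $E_{i,h+1}^{(M),n+1}=E_{i,h}^{(M),n}\setminus G_i^{(M),n}$ holds deterministically, independently of what $h_i^{(M),n+1}$ turns out to be.
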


\begin{proof}
Let $h \in \llbracket 0,H \rrbracket$, $n \in \nmath$ and $i \in \zmath$. Assume that $\Card(E_{i,h}^{(M),n}) \in W_{h}^{(M),\epsilon}$. We make the following observation. 
\begin{enumerate}
\item If less than $\lfloor gM \rfloor M^{-1} (1+\epsilon) \Card(E_{i,h}^{(M),n})$ age~$h$ seeds germinate during generation~$n$, the number of remaining age~$h$ seeds is bounded from below by
\begin{align*}
& \lfloor gM \rfloor \left[
\left(1-\frac{\lgmr}{M}\right)^{h}-\epsilon a_{h} 
- \frac{\lgmr}{M}(1+\epsilon) \left(\left(1-\frac{\lgmr}{M}\right)^{h} + \epsilon a_{h}\right)
\right] \\
&= \lgmr \left[
\left(1-\frac{\lgmr}{M}\right)^{h} \left(1-\frac{\lgmr}{M}\right) - \epsilon a_{h} - \frac{\lgmr}{M} \epsilon a_{h} - \frac{\lgmr}{M}\epsilon
\left(1 - \frac{\lgmr}{M}\right)^{h} - \frac{\lgmr}{M}\epsilon^{2} a_{h}
\right] \\
&\geq \lgmr \left[
\left(1-\frac{\lgmr}{M}\right)^{h+1} - \epsilon(3a_{h} + 1)
\right] \\
&= \lgmr \left(\left(
1-\frac{\lgmr}{M}
\right)^{h+1} - \epsilon a_{h+1}\right)
\end{align*}
by definition of $(a_{h})_{h \geq 0}$. By Lemma~\ref{lem:hypergeometric_law}, this event happens with probability bounded from below by
\begin{equation*}
1-e^{-2\epsilon^{2} \lfloor gM \rfloor^{3} ((1-\lfloor gM \rfloor M^{-1})^{h}-\epsilon a_{h})^{2} M^{-2}}
\geq 1 - e^{-2\epsilon^{2} \lfloor gM \rfloor^{3} ((1-\lfloor gM \rfloor M^{-1})^{H}-\epsilon a_{H})^{2} M^{-2}}.
\end{equation*}
\item If more than $\lfloor gM \rfloor M^{-1} (1-\epsilon) \Card(E_{i,h}^{(M),n})$ age~$h$ seeds germinate, the number of remaining age~$h$ seeds is bounded from above by
\begin{align*}
& \lfloor gM \rfloor \left[
\left(1-\frac{\lgmr}{M}\right)^{h}+\epsilon a_{h} 
- \frac{\lgmr}{M}(1-\epsilon) \left(\left(1-\frac{\lgmr}{M}\right)^{h} - \epsilon a_{h}\right)
\right] \\
&= \lgmr \left[
\left(1-\frac{\lgmr}{M}\right)^{h} \left(1-\frac{\lgmr}{M}\right) + \epsilon a_{h} + \frac{\lgmr}{M} \epsilon a_{h} + \frac{\lgmr}{M}\epsilon
\left(1 - \frac{\lgmr}{M}\right)^{h} - \frac{\lgmr}{M}\epsilon^{2} a_{h}
\right] \\
&\leq \lgmr \left[
\left(1-\frac{\lgmr}{M}\right)^{h+1} + \epsilon (3a_{h}+1)
\right] \\
&= \lgmr \left(
\left(
1-\frac{\lgmr}{M}
\right)^{h+1} + \epsilon a_{h+1}
\right).
\end{align*}
Again by Lemma~\ref{lem:hypergeometric_law}, this event happens with probability bounded from below by
\begin{equation*}
1 - e^{-2\epsilon^{2} \lfloor gM \rfloor^{3} ((1-\lfloor gM \rfloor M^{-1})^{H}-\epsilon a_{H})^{2} M^{-2}}.
\end{equation*}
\end{enumerate}
Therefore,
\begin{align*}
&\proba\left(\left.\left\{
\Card\left(E_{i,h}^{(M),n} \cap G_{i}^{(M),n}\right) = 0
\right\} \cup 
\left\{
\Card\left(E_{i,h+1}^{(M),n+1}\right) \notin W_{h+1}^{(M),\epsilon}
\right\}
\right|\left\{
\Card\left(E_{i,h}^{(M),n}\right) \in W_{h}^{(M),\epsilon}
\right\}
\right) \\
&\leq \proba\left(\left. \Card\left(E_{i,h}^{(M),n} \cap G_{i}^{(M),n}\right) < \frac{\lfloor gM \rfloor}{M} (1-\epsilon) \Card(E_{i,h}^{(M),n})
\right|\left\{
\Card\left(E_{i,h}^{(M),n}\right) \in W_{h}^{(M),\epsilon}
\right\}
\right) \\
& \quad + \proba\left(\left.\Card\left(E_{i,h}^{(M),n} \cap G_{i}^{(M),n}\right) >
\frac{\lfloor gM \rfloor}{M} (1+\epsilon) \Card(E_{i,h}^{(M),n}) 
\right|\left\{
\Card\left(E_{i,h}^{(M),n}\right) \in W_{h}^{(M),\epsilon}
\right\}
\right) \\
&\leq 2 e^{-2\epsilon^{2} \lfloor gM \rfloor^{3} ((1-\lfloor gM \rfloor M^{-1})^{H}-\epsilon a_{H})^{2} M^{-2}},
\end{align*}
and we can conclude.
\end{proof}

\subsection{Proof of Theorem \ref{thm:cvg_spom}}\label{sec:section_2_3}
In order to show Theorem~\ref{thm:cvg_spom}, we first condition on $h^{(M),n}$ in Eq. (\ref{eqn:terme_moche_3}). In order to do so, we introduce the following notation. For all $n < N \in \nmath$, let $\cond^{(M),n,N}$ be the event defined by
\begin{equation}
\cond^{(M),n,N} := \left(
\overline{\errpar}_{i}^{(M),n'} \right)_{\substack{0 \leq n' \leq N-1 \\ i \in I_{n'+1}}}
\cap \left(
\overline{\errgf}_{i}^{(M),n'} 
\right)_{\substack{0 \leq n' \leq n-1 \\ i \in I_{n'+1}}}.
\end{equation}
Moreover, let $\vcal^{(M),n,N}$ be the set of all possible values for $(h_{i}^{(M),n})_{i \in I_{n+1}}$ given the initial condition and $\cond^{(M),n,N}$. That is, 
\begin{equation*}
\vcal^{(M),n} := \left\{(h_{i})_{i \in I_{n+1}} \in \nmath^{i_{max}^{n+1}-i_{min}^{n+1}+1} : 
\proba\left(\left.
\forall i \in I_{n+1}, h_{i}^{(M),n} = h_{i}
\right|
\cond^{(M),n,N}
\right)
> 0\right\}. 
\end{equation*}

\begin{lem}\label{lem:terme_moche_3}
For all $M \geq 2$ such that  
$\left(1-\lgmr M^{-1}\right)^{H}-\lgmr^{-1} > 0$, 
\begin{align*}
&\sum_{n = 1}^{N} \proba\left(\left.
\bigcup_{i \in I_{n+1}} \errgf_{i}^{(M),n}
\right|\cond^{(M),n,N}\right) \\
&\leq 2
\left(N(i_{max}^{0}-i_{min}^{0} + 3) + N(N+1)\right)
e^{-2\epsilon^{2} \lfloor gM \rfloor^{3} ((1-\lfloor gM \rfloor M^{-1})^{H}-\epsilon a_{H})^{2} M^{-2}}.
\end{align*}
\end{lem}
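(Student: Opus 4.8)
The plan is to bound the sum in Lemma~\ref{lem:terme_moche_3} by conditioning on the value of the age vector $(h_i^{(M),n})_{i \in I_{n+1}}$ and then applying the uniform estimate from Lemma~\ref{lem:distr_age_of_seeds}. Fix $n \in \llbracket 1,N \rrbracket$ and $i \in I_{n+1}$. By the rewriting in Eq.~(\ref{eqn:reecriture_event_conditional}), on the event $\cond^{(M),n,N}$ we have $\errgf_i^{(M),n} \subseteq \{h_i^{(M),n} \leq H\} \cap \big(\{\Card(E_{i,h_i^{(M),n}}^{(M),n} \cap G_i^{(M),n}) = 0\} \cup \{\Card(E_{i,h_i^{(M),n}+1}^{(M),n+1}) \notin W_{h_i^{(M),n}+1}^{(M),\epsilon}\}\big)$. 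First I would partition $\cond^{(M),n,N}$ according to the realized value $(h_i)_{i \in I_{n+1}} \in \vcal^{(M),n,N}$ of the age vector: since $\cond^{(M),n,N}$ forces $\overline{\errf}_i^{(M),n}$ for all relevant $i$ (it contains $\overline{\errgf}_i^{(M),n'}$ for $n' \le n-1$, which controls $\errf_i^{(M),n}$ via the shift argument already used to derive~(\ref{eqn:reecriture_event_conditional})), on this event $\Card(E_{i,h_i}^{(M),n}) \in W_{h_i}^{(M),\epsilon}$ whenever $h_i \leq H$. Hence each per-patch conditional probability is bounded by the quantity in~(\ref{proba:truc_moche_2_et_3}) with $h = h_i$, which by Lemma~\ref{lem:distr_age_of_seeds} is at most $2e^{-2\epsilon^{2} \lfloor gM \rfloor^{3} ((1-\lfloor gM \rfloor M^{-1})^{H}-\epsilon a_{H})^{2} M^{-2}}$, independently of $h_i$.

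Next I would assemble the bound. A union bound over $i \in I_{n+1}$, followed by summing over the (disjoint) values of the age vector weighted by their conditional probabilities — which sum to one — gives
\begin{equation*}
\proba\left(\left. \bigcup_{i \in I_{n+1}} \errgf_i^{(M),n} \right| \cond^{(M),n,N}\right) \leq \Card(I_{n+1}) \cdot 2e^{-2\epsilon^{2} \lfloor gM \rfloor^{3} ((1-\lfloor gM \rfloor M^{-1})^{H}-\epsilon a_{H})^{2} M^{-2}}.
\end{equation*}
Since $\Card(I_{n+1}) = i_{max}^{n+1} - i_{min}^{n+1} + 1 = (i_{max}^{0} - i_{min}^{0} + 1) + 2(n+1)$, summing over $n = 1$ to $N$ yields $\sum_{n=1}^N \Card(I_{n+1}) = N(i_{max}^0 - i_{min}^0 + 1) + 2\sum_{n=1}^N(n+1) = N(i_{max}^0 - i_{min}^0 + 1) + N(N+3) = N(i_{max}^0 - i_{min}^0 + 3) + N(N+1)$, which is exactly the combinatorial factor appearing in the statement. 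Multiplying by the factor $2$ and the exponential completes the argument.

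The main obstacle I anticipate is the conditioning bookkeeping: one must be careful that conditioning on $\cond^{(M),n,N}$ — which involves events up to generation $N-1$ for the $\errpar$ part but only up to $n-1$ for the $\errgf$ part — does not spoil the hypergeometric independence used in Lemma~\ref{lem:distr_age_of_seeds}. The key point is that the germination sets $G_i^{(M),n}$ and the refilling choices are drawn afresh at each generation and are independent of the types and of the past, so that conditioning on $\{\Card(E_{i,h}^{(M),n}) \in W_h^{(M),\epsilon}\}$ (a function of the configuration at generation $n$) together with future-irrelevant information leaves the conditional law of $\Card(E_{i,h}^{(M),n} \cap G_i^{(M),n})$ hypergeometric as claimed. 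Once this independence is justified, the rest is the elementary union-bound and arithmetic sketched above.
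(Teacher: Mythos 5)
Your proposal is correct and follows essentially the same route as the paper: partition the conditioning event according to the realized age vector $(h_i^{(M),n})_{i \in I_{n+1}}$, note that $\cond^{(M),n,N}$ guarantees $\Card(E_{i,h_i}^{(M),n}) \in W_{h_i}^{(M),\epsilon}$ via the $\overline{\errgf}_i^{(M),n-1}$ part, invoke Lemma~\ref{lem:distr_age_of_seeds} uniformly in $h_i$ after observing that the germination draw at generation $n$ is independent of the rest of the conditioning, and finish with the same union bound and arithmetic. The combinatorial bookkeeping $\sum_{n=1}^{N}\Card(I_{n+1}) = N(i_{max}^{0}-i_{min}^{0}+3)+N(N+1)$ matches the paper exactly.
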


\begin{proof}
Let $n \in \llbracket 1,N \rrbracket$. Then, 
\begin{align*}
&\proba\left(\left.
\bigcup_{i \in I_{n+1}} \errgf_{i}^{(M),n}
\right|\cond^{(M),n,N}\right) \\
&= \sum_{(h_{i})_{i \in I_{n+1}} \in \vcal^{(M),n}} 
\proba\left(\left.
\forall i \in I_{n+1}, h_{i}^{(M),n} = h_{i}
\right|
\cond^{(M),n,N}
\right) \\
&\qquad \qquad  \qquad \qquad \times \proba\left(\left.
\bigcup_{i \in I_{n+1}} 
\errgf_{i}^{(M),n}
\right|
\cond^{(M),n,N} \cap \left\{
\forall i \in I_{n+1}, h_{i}^{(M),n} = h_{i}
\right\}
\right) \\
&\leq \sum_{(h_{i})_{i \in I_{n+1}} \in \vcal^{(M),n}} \sum_{i \in I_{n+1}}
\proba\left(\left.
\forall i' \in I_{n+1}, h_{i'}^{(M),n} = h_{i'}
\right|
\cond^{(M),n,N}
\right) \\
&\qquad \qquad \qquad \qquad \qquad \quad \times \proba\left(\left.
\errgf_{i}^{(M),n}
\right|
\cond^{(M),n,N} \cap \left\{
\forall i' \in I_{n+1}, h_{i'}^{(M),n} = h_{i'}
\right\}
\right).
\end{align*}
Moreover, for all $i \in I_{n+1}$ and $(h_{i})_{i \in I_{n+1}} \in \mathcal{V}^{(M),n}$, by (\ref{eqn:reecriture_event_conditional}), if we define the event $\cond^{+,(M),n,N,(h_{i})_{i \in I_{n+1}}}$ as
\begin{equation*}
\cond^{+,(M),n,N,(h_{i})_{i \in I_{n+1}}} := \cond^{(M),n,N} \cap \left\{
\forall i \in I_{n+1}, h_{i}^{(M),n} = h_{i}\right\},
\end{equation*}
then
\begin{align*}
&\proba\left(\left.
\errgf_{i}^{(M),n}
\right|
\cond^{(M),n,N} \cap \left\{
\forall i' \in I_{n+1}, h_{i'}^{(M),n} = h_{i'}
\right\}
\right) \\
&\leq \un{h_{i} \leq H} \\
&\qquad \times\proba\left(\left.\left\{
\Card\left(
E_{i,h_{i}}^{(M),n} \cap G_{i}^{(M),n}\right) = 0 \right\} \cup \left\{
\Card\left(
E_{i,h_{i}+1}^{(M),n+1}
\right) \notin W_{h_{i}+1}^{(M),\epsilon}
\right\}\right|
\cond^{+,(M),n,N,(h_{i})_{i \in I_{n+1}}}
\right).
\end{align*}
Whether $\Card\left(
E_{i,h_{i}}^{(M),n} \cap G_{i}^{(M),n}\right) = 0$ or $\Card\left(
E_{i,h_{i}+1}^{(M),n+1}
\right) \notin W_{h_{i}+1}^{(M),\epsilon}$ only depends on the number of age~$h_{i}$ seeds in patch~$i$ at the beginning of generation~$n$, and not on the past dynamics, the age of the youngest type~$1$ seeds in patch~$i$ as well as other patches, or the composition of other patches. Therefore, the event
\begin{equation*}
\left\{
\Card\left(
E_{i,h_{i}}^{(M),n} \cap G_{i}^{(M),n}\right) = 0 \right\} \cup \left\{
\Card\left(
E_{i,h_{i}+1}^{(M),n+1}
\right) \notin W_{h_{i}+1}^{(M),\epsilon}
\right\}
\end{equation*}
is independent from most of the events whose union form $\cond^{+,(M),n,N,(h_{i})_{i \in I_{n+1}}}$, and
\begin{align*}
&\proba\left(\left.
\errgf_{i}^{(M),n}
\right|
\cond^{(M),n,N} \cap \left\{
\forall i' \in I_{n+1}, h_{i'}^{(M),n} = h_{i'}
\right\}
\right) \\
&\leq \un{h_{i} \leq H} \\
&\qquad \times
\proba\left(\left.\left\{
\Card\left(
E_{i,h_{i}}^{(M),n} \cap G_{i}^{(M),n}\right) = 0 \right\} \cup \left\{
\Card\left(
E_{i,h_{i}+1}^{(M),n+1}
\right) \notin W_{h_{i}+1}^{(M),\epsilon}
\right\}\right|
\Card\left(
E_{i,h_{i}}^{(M),n}
\right) \in W_{h_{i}}^{(M),\epsilon}
\right) \\
&\leq 2 e^{-2\epsilon^{2}\lfloor gM \rfloor^{3} ((1-\lfloor gM \rfloor M^{-1})^{H}-\epsilon a_{H})^{2}M^{-2}}
\end{align*}
by Lemma \ref{lem:distr_age_of_seeds}. Therefore, 
\begin{align*}
&\proba\left(\left.
\bigcup_{i \in I_{n+1}} \errgf_{i}^{(M),n}
\right|\cond^{(M),n,N}\right) \\
&\leq 2 \left(
i_{max}^{n+1}-i_{min}^{n+1}+1
\right) e^{-2\epsilon^{2}\lfloor gM \rfloor^{3} ((1-\lfloor gM \rfloor M^{-1})^{H}-\epsilon a_{H})^{2}M^{-2}} \\
&\leq 2(i_{max}^{0}-i_{min}^{0}+1+2n+2) e^{-2\epsilon^{2}\lfloor gM \rfloor^{3} ((1-\lfloor gM \rfloor M^{-1})^{H}-\epsilon a_{H})^{2}M^{-2}} \\
\intertext{and}
&\sum_{n = 1}^{N} \proba\left(\left.
\bigcup_{i \in I_{n+1}} \errgf_{i}^{(M),n}
\right|\cond^{(M),n,N}\right) \\
&\leq \sum_{n = 1}^{N}\left(2(2n+i_{max}^{0}-i_{min}^{0}+3)\right) e^{-2\epsilon^{2}\lfloor gM \rfloor^{3} ((1-\lfloor gM \rfloor M^{-1})^{H}-\epsilon a_{H})^{2}M^{-2}} \\
&\leq \left(
2N(N+1)+2(i_{0}^{max}-i_{0}^{min})N+6N
\right)e^{-2\epsilon^{2}\lfloor gM \rfloor^{3} ((1-\lfloor gM \rfloor M^{-1})^{H}-\epsilon a_{H})^{2}M^{-2}},
\end{align*}
which allows us to conclude. 
\end{proof}

Obtaining a similar result for the quantity in (\ref{eqn:terme_moche_2}) does not require conditioning, since $h^{(M),0}$ is determined by the initial condition. 

\begin{lem}\label{lem:terme_moche_2}
For all $M \geq 2$ such that  
$\left(1-\lgmr M^{-1}\right)^{H}-\lgmr^{-1} > 0$, 
\begin{align*}
&\proba\left(\left.
\bigcup_{i \in I_{1}} \errgf_{i}^{(M),0}
\right|
\left(
\overline{\errpar}_{i}^{(M),n'} \right)_{\substack{0 \leq n' \leq N-1 \\ i \in I_{n'+1}}}\right) \leq 2\left(i_{max}^{0}-i_{min}^{0}+3\right)
e^{-2\epsilon^{2} \lfloor gM \rfloor^{3} ((1-\lfloor gM \rfloor M^{-1})^{H}-\epsilon a_{H})^{2} M^{-2}}.
\end{align*}
\end{lem}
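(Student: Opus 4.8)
The plan is to follow the same strategy as in the proof of Lemma~\ref{lem:terme_moche_3}, but the argument is shorter because $h^{(M),0}$ is entirely determined by the initial condition, so there is no age profile to sum over. First I would apply a union bound over the patches in $I_{1}$. Since $i_{min}^{1} = i_{min}^{0}-1$ and $i_{max}^{1} = i_{max}^{0}+1$, we have $\Card(I_{1}) = i_{max}^{0}-i_{min}^{0}+3$, so it suffices to show that for every $i \in I_{1}$,
\[
\proba\left(\left. \errgf_{i}^{(M),0} \,\right| \left(\overline{\errpar}_{i}^{(M),n'}\right)_{\substack{0 \leq n' \leq N-1 \\ i \in I_{n'+1}}}\right) \leq 2 e^{-2\epsilon^{2} \lfloor gM \rfloor^{3} ((1-\lfloor gM \rfloor M^{-1})^{H}-\epsilon a_{H})^{2} M^{-2}}.
\]

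Next, for a fixed $i \in I_{1}$, I would invoke the inclusion~(\ref{eqn:reecriture_event_conditional}) with $n = 0$, namely
\[
\errgf_{i}^{(M),0} \subseteq \{h_{i}^{(M),0} \leq H\} \cap \left( \left\{ \Card\left(E_{i,h_{i}^{(M),0}}^{(M),0} \cap G_{i}^{(M),0}\right) = 0 \right\} \cup \left\{ \Card\left(E_{i,h_{i}^{(M),0}+1}^{(M),1}\right) \notin W_{h_{i}^{(M),0}+1}^{(M),\epsilon} \right\} \right).
\]
On the event $\{h_{i}^{(M),0} \leq H\}$ the number $h_{i}^{(M),0}$ is a fixed integer in $\llbracket 0,H \rrbracket$, and condition~(IC2) then guarantees \emph{deterministically} that $\Card(E_{i,h_{i}^{(M),0}}^{(M),0}) \in W_{h_{i}^{(M),0}}^{(M),\epsilon}$, i.e. the hypothesis of Lemma~\ref{lem:distr_age_of_seeds} is satisfied with no conditioning. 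I would then observe, exactly as in the proof of Lemma~\ref{lem:terme_moche_3}, that both $\Card(E_{i,h_{i}^{(M),0}}^{(M),0} \cap G_{i}^{(M),0})$ and $\Card(E_{i,h_{i}^{(M),0}+1}^{(M),1}) = \Card(E_{i,h_{i}^{(M),0}}^{(M),0} \setminus G_{i}^{(M),0})$ are measurable functions of the germinating set $G_{i}^{(M),0}$ in patch~$i$ during generation~$0$ only; since the sampling of $G_{i}^{(M),0}$ (step~1 of the definition of the process) is independent of the choice of potential parents (step~3) and of what happens in the other patches, the event on the right-hand side is independent of the whole family $(\overline{\errpar}_{i}^{(M),n'})$. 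Hence the conditioning can be dropped, and Lemma~\ref{lem:distr_age_of_seeds} applied with $n = 0$ and $h = h_{i}^{(M),0}$ yields the displayed per-patch bound. Summing it over the $i_{max}^{0}-i_{min}^{0}+3$ indices $i \in I_{1}$ gives the claim.

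I do not expect a genuine obstacle here: the lemma is essentially the $n=0$ instance of the argument behind Lemma~\ref{lem:terme_moche_3}, with the conditioning on the age profile made trivial by the fixed initial condition. The only point requiring care is making explicit which randomness the germination set $G_{i}^{(M),0}$ depends on, so that conditioning on the $\overline{\errpar}$ events is genuinely irrelevant and the deterministic form of~(IC2) can be substituted directly into the hypothesis of Lemma~\ref{lem:distr_age_of_seeds}; once this is spelled out, the computation is immediate.
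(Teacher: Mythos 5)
Your proposal is correct and follows essentially the same route as the paper's proof: a union bound over $I_{1}$, the inclusion~(\ref{eqn:reecriture_event_conditional}) at $n=0$, dropping the conditioning on the $\overline{\errpar}$ events by independence of the germination sampling from the parent choices, and applying Lemma~\ref{lem:distr_age_of_seeds} with the hypothesis supplied deterministically by the initial condition. No further comment is needed.
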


\begin{proof}
By \ref{eqn:reecriture_event_conditional} and by Lemma~\ref{lem:distr_age_of_seeds}, given the initial condition, 
\begin{align*}
&\proba\left(\left.
\bigcup_{i \in I_{1}} \errgf_{i}^{(M),0}
\right|
\left(
\overline{\errpar}_{i}^{(M),n'} \right)_{\substack{0 \leq n' \leq N-1 \\ i \in I_{n'+1}}}\right) \\
&\leq \sum_{i \in I_{1}} \un{h_{i}^{(M),0} \leq H} \\
&\qquad \, \times \proba\left(\left\{
\Card\left(
E_{i,h_{i}^{(M),0}}^{(M),0}\cap G_{i}^{(M),0}
\right)  = 0
\right\} \cup \left\{
\Card\left(E_{i,h_{i}^{(M),0}+1}^{(M),1}\right) \notin W_{h_{i}^{(M),0}+1}^{(M),\epsilon}
\right\}\left|
\left(\overline{\mathrm{Par}}_{i}^{(M),n}\right)_{\substack{n' \leq N-1 \\ i \in I_{n'+1}}}
\right.\right) \\
&\leq \sum_{i \in I_{1}} \proba\left(\left\{
\Card\left(
E_{i,h_{i}^{(M),0}}^{(M),0}\cap G_{i}^{(M),0}
\right)  = 0
\right\} \cup \left\{
\Card\left(E_{i,h_{i}^{(M),0}+1}^{(M),1}\right) \notin W_{h_{i}^{(M),0}+1}^{(M),\epsilon}
\right\}\right) \\
&\leq 2\left(
i_{max}^{0}-i_{min}^{0} +3
\right)
e^{-2\epsilon^{2}\lfloor gM \rfloor^{3} ((1-\lfloor gM \rfloor M^{-1})^{H}-\epsilon a_{H})^{2}M^{-2}}
\end{align*}
given the initial condition. 
\end{proof}

We can now show Theorem \ref{thm:cvg_spom} for our specific initial condition. 

\begin{proof}(Theorem \ref{thm:cvg_spom}, post-transition period)
Let $M \geq 2$ be such that  
$\left(1-\lgmr M^{-1}\right)^{H}-\lgmr^{-1} > 0$. The result is clear for $N = 0$. For $N \in \nmath \backslash \{0\}$, by definition of the event $\dboa_{i}^{(M),n}$,
\begin{align*}
&\proba\left(\bigcap_{n = 0}^{N} \left(
\left\{
\forall i \in \zmath, O_{i}^{(M),n} = O_{i}^{(M),\infty,n}
\right\} \cap \left\{
\forall i \in \zmath, h_{i}^{(M),n} = h_{i}^{(M),\infty,n}
\right\}
\right)\right) \\
&= \proba\left(
\bigcap_{n = 0}^{N}\bigcap_{i \in \zmath} \overline{\dboa}_{i}^{(M),n}
\right)\\
&= 1 - \proba\left(
\bigcup_{n = 0}^{N}\bigcup_{i \in \zmath} \dboa_{i}^{(M),n}
\right) \\
&= 1 - \proba\left(
\bigcup_{n = 1}^{N}\bigcup_{i \in I_{n}} \dboa_{i}^{(M),n}
\right)
\end{align*}
by Eq.~(\ref{eqn:equality_theorem_2}). Therefore, by Eq.~(\ref{eqn:equality_theorem_1}), 
\begin{align*}
&\proba\left(\bigcap_{n = 0}^{N} \left(
\left\{
\forall i \in \zmath, O_{i}^{(M),n} = O_{i}^{(M),\infty,n}
\right\} \cap \left\{
\forall i \in \zmath, h_{i}^{(M),n} = h_{i}^{(M),\infty,n}
\right\}
\right)\right) \\
&\geq 1 - \proba\left(\bigcup_{n = 0}^{N-1} \bigcup_{i \in I_{n+1}} 
\errpar_{i}^{(M),n} \cup \errg_{i}^{(M),n} \cup \errf_{i}^{(M),n+1}
\right),
\end{align*}
which can be bounded from below by $1-$ Eq.(\ref{eqn:terme_moche_1}) $-$ Eq.(\ref{eqn:terme_moche_2}) $-$ Eq.(\ref{eqn:terme_moche_3}). By Lemmas~\ref{lem:maj_p_rm}, \ref{lem:terme_moche_2} and \ref{lem:terme_moche_3}, we can show that each of the three terms converges to $0$ when $M \to + \infty$, allowing us to conclude. 
\end{proof}

We now explain how to generalize the proof of Theorem~\ref{thm:cvg_spom} to a more general sequence of initial conditions not necessarily satisfying (IC1) and (IC2). Since $(\xi^{(M),n},h^{(M),n})_{n \in \nmath}$ is Markovian for all $M \geq 2$, it is sufficient to show that the process does not deviate from the BOA process during the transition period, and that at the end of this period, $(\xi^{(M),H+1},h^{(M),H+1})$ satisfies (IC1) and (IC2). 

In order to do so, let $n \in \llbracket 0,H \rrbracket$ be a generation from the transition period, and let $i \in \zmath$. We distinguish three cases. 
\begin{enumerate}
\item If $O_{i}^{\infty} = 0$, then by condition~(A) patch $i$ is initially empty, so all viable $h_{i}^{\infty,0}$ seeds it contains are of type~$0$. We are then in the same situation as during the post-transition period. 
\item If $O_{i}^{\infty} = 1$ and $h_{i}^{(M),n} > n-1$, then the age $h_{i}^{(M),n}$ seeds in patch $i$ were already present initially. Similarly as before and using condition~(C), we can show that with high probability, the number of remaining type~$1$ seeds of age $h_{i}^{(M),n}$ is roughly equal to $g_{i}(1-g)^{n}M$, and at least one of them germinates during generation~$n$. 
\item If $O_{i}^{\infty} = 1$ and $h_{i}^{(M),n} \leq n+1$, then the age $h_{i}^{(M),n}$ seeds in patch $i$ were not present initially. 
By Lemma~\ref{lem:maj_p_rm}, they are all of the same type with high probability, and we are back to the case considered during the post-transition period. 
\end{enumerate}

\section{Extinction threshold for the $k$-parent WFSB metapopulation process}\label{sec:ext_threshold}
This section is devoted to the proof of Theorem \ref{thm:proba_critique}, that is, to the proof of the existence of a critical extinction probability $p_{crit}(H)$ depending only on the maximal dormancy duration $H$. In order to do so, we will first formalize the coupling between the $k$-parent WFSB metapopulation process and a BOA process. Then, we will explain how the issue of occupied patches in the BOA process can be seen as a percolation problem. We will conclude using a specific case of Eq.(4) in \cite{hartarsky2021generalised}.

\subsection{Coupling between the $k$-parent WFSB metapopulation process and the BOA process}\label{sec:BOA_process}
In all that follows, let $(\xi, h) \in \fcal_{M} \times \hcal_{M}$, let $(\xi^{n},h^{n})_{n \in \nmath}$ be the $k$-parent WFSB metapopulation process with parameters $(M,H,g,c,p)$ and initial condition $(\xi,h)$, and let $(O^{k,n},h^{k,n})_{n \in \nmath}$ be the associated $k$-parent occupancy process. In order to couple a BOA process to $(\xi^{n},h^{n})_{n \in \nmath}$, for all $n \in \nmath^{*}$, we denote by $(\textrm{Ext}_{i}^{n})_{i \in \zmath}$ the extinction events used to define $(\xi^{n},h^{n})$ given $(\xi^{n-1},h^{n-1})$. In other words, for all $n \in \nmath^{*}$ and $i \in \zmath$, $\textrm{Ext}_{i}^{n} = 1$ if, and only if the patch $i$ was extinct during the $n$-th generation. We then define the coupled BOA process $(O^{\infty,n},h^{\infty,n})_{n \in \nmath}$ as the BOA process with parameters $(H,p)$ and initial condition $(O^{k,0},h^{k,0})$, constructed using the extinction events $(\textrm{Ext}_{i}^{n})_{i \in \zmath, n \in \nmath^{*}}$: for all $n \in \nmath$, $(O^{\infty,n+1},h^{\infty,n+1})$ is constructed using $(O^{\infty,n},h^{\infty,n})$ and the extinction events $(\textrm{Ext}_{i}^{n+1})_{i \in \zmath}$. This coupling satisfies the following property, whose proof is postponed until later in this section for the sake of clarity.

\begin{prop}\label{prop:coupling_BOA}
For all $n \in \nmath$ and $i \in \zmath$, 
\begin{equation*}
O_{i}^{k,n} \leq O_{i}^{\infty,n} \text{  and  } h_{i}^{k,n} \geq h_{i}^{\infty,n}.
\end{equation*}
\end{prop}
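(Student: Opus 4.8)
The plan is to prove Proposition~\ref{prop:coupling_BOA} by induction on $n$, comparing the two processes generation by generation under the stated coupling (same extinction events $(\textrm{Ext}_{i}^{n})$). The base case $n=0$ is immediate: by the definition of the associated BOA process, $(O^{\infty,0},h^{\infty,0}) = (O^{k,0},h^{k,0})$, so both inequalities hold with equality. For the inductive step, assume $O_{i}^{k,n} \leq O_{i}^{\infty,n}$ and $h_{i}^{k,n} \geq h_{i}^{\infty,n}$ for all $i \in \zmath$, and examine how generation $n+1$ is produced in each process.

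The first key step is to translate the induction hypothesis into a statement about \emph{reachable} patches. Since the indicator $x \mapsto \un{x \leq H}$ is non-increasing, $h_{i}^{k,n} \geq h_{i}^{\infty,n}$ gives $\un{h_{i}^{k,n} \leq H} \leq \un{h_{i}^{\infty,n} \leq H}$, and combined with $O_{i}^{k,n} \leq O_{i}^{\infty,n}$ this yields $O_{i}^{k,n}\un{h_{i}^{k,n}\leq H} \leq O_{i}^{\infty,n}\un{h_{i}^{\infty,n}\leq H}$; that is, every patch containing a viable type~$1$ seed in the occupancy process (equivalently, every patch whose seed bank in the WFSB process contains a non-expired type~$1$ seed at the start of generation~$n$) is reachable in the BOA process. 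The next step is to show that colonization in the WFSB process can only happen ``within'' what the BOA process permits. Concretely, I would argue: if $O_{i}^{k,n+1} = 1$ for some $i$, then the seed bank of patch~$i$ at the start of generation $n+1$ contains a seed originally of type~$1$; such a seed either was already present (and not expired away, but here we only track $O$, so it suffices that some type-$1$-origin seed was present at generation~$n$, i.e. $O_{i}^{k,n}=1$, and it is one of the $M - \lfloor gM\rfloor$ ungerminated ones) or was newly produced during generation~$n$. In the latter case, some real plant in patch $i-1$, $i$, or $i+1$ was chosen as a potential parent, which requires that patch to have germinated a viable type~$1$ seed and not to have gone extinct: $O_{i'}^{k,n}\un{h_{i'}^{k,n}\leq H}(1-\textrm{Ext}_{i'}^{n+1}) = 1$ for some $i' \in \{i-1,i,i+1\}$. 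By the reachability translation this forces $O_{i'}^{\infty,n}\un{h_{i'}^{\infty,n}\leq H}(1-\textrm{Ext}_{i'}^{n+1}) = 1$, so by Step~2 of the BOA dynamics applied at $i'$ we get $O_{i}^{\infty,n+1} = 1$. In the former case ($O_i^{k,n}=1$ via an ungerminated old seed), if $O_i^{\infty,n+1}$ were set to $0$ that could only happen if $O_i^{\infty,n}=0$, contradicting $O_i^{k,n}\leq O_i^{\infty,n}$; so $O_i^{\infty,n+1}=1$ here too. This establishes $O_{i}^{k,n+1} \leq O_{i}^{\infty,n+1}$.

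The third step handles the age coordinate, and requires a careful case split mirroring the update rules. Fix $i$ with $h_i^{k,n+1} < h_i^{\infty,n+1}$ being the hypothetical bad case to exclude. If $h_i^{\infty,n+1} = 0$ there is nothing to prove, so assume $h_i^{\infty,n+1} = h_i^{\infty,n}+1$, i.e. patch $i$ was not rewritten to age~$0$ in the BOA process at generation $n+1$ — meaning none of $i-1,i,i+1$ was reachable-and-surviving in the BOA process. By the reachability translation, none of $i-1,i,i+1$ germinated a viable type~$1$ seed and survived in the WFSB process either, so no new type~$1$ seed entered patch~$i$'s seed bank during generation~$n$; hence in the occupancy process, either $O_i^{k,n+1}=0$ (and then $h_i^{k,n+1} = h_i^{k,n}+1 \geq h_i^{\infty,n}+1 = h_i^{\infty,n+1}$ by induction), or $O_i^{k,n+1}=1$ because some type-$1$-origin seed survived ungerminated, in which case its age (in generation $n+1$) is one more than its age at generation $n$, and that age at generation $n$ was $\geq h_i^{k,n} \geq h_i^{\infty,n}$ by the definition of $h_i^{k,n}$ as the minimal age of a type-$1$-origin seed; again $h_i^{k,n+1} \geq h_i^{\infty,n}+1 = h_i^{\infty,n+1}$. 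Either way the bad case cannot occur, completing the induction.

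I expect the main obstacle to be Step~3, and more precisely the bookkeeping needed to relate $h_i^{k,n+1}$ to the ages of individual seeds present in the WFSB seed bank — one must be careful that $h_i^{k,n}$ is defined as the \emph{minimum} age over type-$1$-origin seeds, that ungerminated seeds have their age incremented by exactly one (Definition of the WFSB process, part~4), and that the ``otherwise'' branch $h_i^{k,n+1} = h_i^{k,n-1}+1$ in the occupancy-process definition is consistent with $h_i^{k,n}+1$ when $O_i^{k,n}=0$. A clean way to organize this is to first prove the auxiliary fact that for every $n$ and $i$, if $O_i^{k,n}=1$ then patch $i$ contains a type-$1$-origin seed of age exactly $h_i^{k,n}$, and that the set of patches with $O^{k}\un{h^k\leq H}=1$ only grows to nearest neighbours and only from non-extinct patches — which also re-proves the finiteness/support-propagation claim used implicitly. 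With that lemma in hand, Steps~2 and~3 become essentially the verification that the BOA update rules dominate these two monotonicity facts, which is routine but must be written out per case.
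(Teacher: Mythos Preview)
Your proposal is correct and follows essentially the same route as the paper's proof: induction on $n$, with the base case given by the shared initial condition, and the inductive step split into the $O$-inequality and the $h$-inequality, each handled by translating the hypothesis into the reachability comparison $O_{i}^{k,n}\un{h_{i}^{k,n}\leq H}\leq O_{i}^{\infty,n}\un{h_{i}^{\infty,n}\leq H}$ and then using the update rules together with the monotonicity of $(O_{i}^{\infty,n})_{n}$. The only cosmetic difference is that you argue the contrapositive (``$O_{i}^{k,n+1}=1\Rightarrow O_{i}^{\infty,n+1}=1$'' and ``$h_{i}^{\infty,n+1}>0\Rightarrow h_{i}^{k,n+1}\geq h_{i}^{\infty,n+1}$'') where the paper argues the direct implication, and you are slightly more careful in noting that $h_{i}^{k,n+1}$ may exceed $h_{i}^{k,n}+1$ rather than equal it --- a harmless imprecision in the paper's phrasing that your treatment avoids.
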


Therefore, at any generation $n \in \nmath$, the set of patches which contain nonexpired seeds in the $k$-parent WFSB metapopulation process is included in the set of reachable patches in the BOA process (that is, patches $i$ such that $O_{i}^{\infty,n} \un{h_{i}^{\infty,n} \leq H} = 1$). In particular, a consequence of this coupling is the following corollary.

\begin{cor}\label{corr:coupling_BOA}
For all $n \in \nmath$,
\begin{equation*}
\proba\left(
1 - \prod_{(i,j) \in \zmath \times \llbracket 1,M \rrbracket} \left(1 - \mathds{1}_{\{h_{i,j}^{n} \leq H \}} \xi_{i,j}^{n}\right) = 1
\right) \leq \proba\left(
1 - \prod_{i \in \zmath} \left(
1 - \mathds{1}_{\{h_{i}^{\infty,n} \leq H\}} O_{i}^{\infty,n}
\right) = 1
\right).
\end{equation*}
\end{cor}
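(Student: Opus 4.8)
The plan is to observe that the stated inequality is an immediate consequence of Proposition~\ref{prop:coupling_BOA}: on the coupled probability space (where the BOA process is built from the same extinction events as the $k$-parent WFSB metapopulation process) the event appearing on the left-hand side is contained in the event appearing on the right-hand side, and then monotonicity of $\proba$ gives the result. So the whole argument is a short translation-of-events computation once Proposition~\ref{prop:coupling_BOA} is available.

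First I would rewrite both events as existence statements. The left-hand event
\[
\left\{1 - \prod_{(i,j) \in \zmath \times \llbracket 1,M \rrbracket}\left(1 - \mathds{1}_{\{h_{i,j}^{n} \leq H\}}\,\xi_{i,j}^{n}\right) = 1\right\}
\]
is exactly the event that there exists $(i,j) \in \zmath \times \llbracket 1,M \rrbracket$ with $\xi_{i,j}^{n} = 1$ and $h_{i,j}^{n} \leq H$, i.e.\ that the $k$-parent WFSB metapopulation process contains at least one viable type~$1$ seed at the beginning of generation~$n$. In the same way, the right-hand event is the event that there exists $i \in \zmath$ with $O_{i}^{\infty,n} = 1$ and $h_{i}^{\infty,n} \leq H$, i.e.\ that at least one patch is reachable in the coupled BOA process at generation~$n$.

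Next I would show the inclusion. Suppose the left-hand event holds and fix $(i,j)$ with $\xi_{i,j}^{n} = 1$ and $h_{i,j}^{n} \leq H$. By the definition of the $k$-parent occupancy process, $O_{i}^{k,n} = 1 - \prod_{j' \in \llbracket 1,M \rrbracket}(1-\xi_{i,j'}^{n}) = 1$, and since $O_{i}^{k,n} = 1$ we have $h_{i}^{k,n} = \min\{h_{i,j'}^{n} : \xi_{i,j'}^{n} = 1\} \leq h_{i,j}^{n} \leq H$ (this holds both for $n=0$ and for $n \geq 1$, the two branches in the definition of $h_{i}^{k,n}$ agreeing when $O_{i}^{k,n}=1$). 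Applying Proposition~\ref{prop:coupling_BOA} then gives $O_{i}^{\infty,n} \geq O_{i}^{k,n} = 1$, hence $O_{i}^{\infty,n} = 1$, and $h_{i}^{\infty,n} \leq h_{i}^{k,n} \leq H$; thus patch~$i$ is reachable in the coupled BOA process at generation~$n$, so the right-hand event holds. Taking probabilities of this inclusion gives the stated inequality.

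I do not expect any genuine obstacle: the substantive work is entirely carried by Proposition~\ref{prop:coupling_BOA}. The only points that need a little care are the translation of the two products into the corresponding existence statements and checking that, when $O_{i}^{k,n} = 1$, the age $h_{i}^{k,n}$ is indeed the minimal age of a type~$1$ seed in patch~$i$ (so that the bound $h_{i}^{k,n} \leq H$ is legitimately available from $h_{i,j}^{n} \leq H$); both are immediate from the definitions.
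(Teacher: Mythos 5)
Your proposal is correct and follows essentially the same route as the paper: both arguments reduce the corollary to the pointwise coupling inequality of Proposition~\ref{prop:coupling_BOA} via the observation that $\xi_{i,j}^{n}=1$ forces $O_{i}^{k,n}=1$ and $h_{i}^{k,n}\leq h_{i,j}^{n}$. The only cosmetic difference is that you phrase the comparison as an inclusion of events (existence statements) while the paper multiplies the pointwise indicator inequalities $\mathds{1}_{\{h_{i,j}^{n}\leq H\}}\xi_{i,j}^{n}\leq\mathds{1}_{\{h_{i}^{\infty,n}\leq H\}}O_{i}^{\infty,n}$ over all $(i,j)$; these are equivalent formulations of the same step.
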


\begin{proof}
Let $n \in \nmath$. By definition of the $k$-parent occupancy process, for all $(i,j) \in \zmath \times \llbracket 1,M \rrbracket$, 
\begin{equation*}
\xi_{i,j}^{n} \leq O_{i}^{k,n}.
\end{equation*}
Indeed, both $\xi_{i,j}^{n}$ and $O_{i}^{k,n}$ are $\{0,1\}$-valued, and if $\xi_{i,j}^{n} = 1$, then $O_{i}^{k,n} = 1$. 

Moreover, if $O_{i}^{k,n} = 1$, then $h_{i}^{k,n}$ is the age of the youngest type $1$ seed in patch $i$. 
Therefore, for all $(i,j) \in \zmath \times \llbracket 1,M \rrbracket$, if $\xi_{i,j}^{n} = 1$, then $h_{i,j}^{n} \geq h_{i}^{k,n}$.
We deduce that 
\begin{equation*}
\mathds{1}_{\{h_{i,j}^{n} \leq H\}}\xi_{i,j}^{n}
\leq \mathds{1}_{\{h_{i,j}^{k,n} \leq H \}} O_{i}^{k,n}.
\end{equation*}
By Proposition \ref{prop:coupling_BOA}, we obtain
\begin{equation*}
\mathds{1}_{\{h_{i,j}^{n} \leq H\}} \xi_{i,j}^{n} \leq \mathds{1}_{\{h_{i}^{\infty,n} \leq H\}} O_{i}^{\infty,n}.
\end{equation*}
Taking the product over all $(i,j) \in \zmath \times \llbracket 1,M \rrbracket$ yields
\begin{align*}
1 - \prod_{(i,j) \in \zmath \times \llbracket 1,M \rrbracket} \left(
1 - \mathds{1}_{\{h_{i,j}^{n} \leq H\}} \xi_{i,j}^{n}
\right) \leq & 
1 - \prod_{(i,j) \in \zmath \times \llbracket 1,M \rrbracket}\left(
1 - \mathds{1}_{\{h_{i}^{\infty,n} \leq H\}} O_{i}^{\infty,n}
\right) \\
\leq &
1 - \prod_{i \in \zmath}\left(
1 - \mathds{1}_{\{h_{i}^{\infty,n} \leq H\}} O_{i}^{\infty,n}
\right).
\end{align*}
since all the terms of the product are $\{0,1\}$-valued, and we can conclude.
\end{proof}

We now show Proposition \ref{prop:coupling_BOA}. 

\begin{proof}(Proposition \ref{prop:coupling_BOA}) 
We show the result by induction. 
For $n = 0$, since $(O^{\infty,0},h^{\infty,0}) = (O^{k,0},h^{k,0})$, 
\begin{align*}
1 - \prod_{(i,j) \in \zmath \times \llbracket 1,M \rrbracket} \left(
1 - \un{h_{i,j}^{0} \leq H} \xi_{i,j}^{0}
\right) &= 
1 - \prod_{i \in \zmath} \prod_{j \in \llbracket 1,M \rrbracket} \left(
1 - \un{h_{i,j}^{0} \leq H} \xi_{i,j}^{0}
\right) \\
&= 1 - \prod_{i \in \zmath} \left(
1 - \un{h_{i}^{k,0} \leq H} O_{i}^{k,0}
\right) \\
&= 1 - \prod_{i \in \zmath} \left(
1 - \un{h_{i}^{\infty,0} \leq H} O_{i}^{\infty,n}
\right),
\end{align*}
so the result is true for $n = 0$.

Let then $n \in \nmath$, and we assume that for all $i \in \zmath$, 
\begin{equation*}
O_{i}^{k,n} \leq O_{i}^{\infty,n} \text{  and  } h_{i}^{k,n} \geq h_{i}^{\infty,n}.
\end{equation*}
Let $i \in \zmath$. We first show that $O_{i}^{k,n+1} \leq O_{i}^{\infty,n+1}$. Since $O_{i}^{k,n+1} \in \{0,1\}$, if $O_{i}^{\infty,n+1} = 1$, then $O_{i}^{k,n+1} \leq O_{i}^{\infty,n+1}$. Therefore, we assume $O_{i}^{\infty,n+1} = 0$. 
Notice that by definition of the BOA process, $(O_{i}^{\infty,n})_{n \in \nmath}$ is an increasing sequence. Indeed, for all $n \geq 0$ and $i \in \zmath$, $O_{i}^{\infty,n+1}$ is set equal to $O_{i}^{\infty,n}$ or $1$, so if $O_{i}^{\infty,n} = 1$, then $O_{i}^{\infty,n+1} \in \{O_{i}^{\infty,n},1\} = \{1\}$, and for all $n' \geq n$,  we have $O_{i}^{\infty,n'} = 1$.
This means that $O_{i}^{\infty,n+1} = 0$ implies $O_{i}^{\infty,n} = 0$ and $O_{i}^{k,n} = 0$. Moreover, it also means that both neighbouring patches were either extinct or not reachable in generation $n$. We deduce
\begin{align*}
\left(1 - \textrm{Ext}_{i+1}^{n+1}\right) O_{i+1}^{\infty,n} \mathds{1}_{\{h_{i+1}^{\infty,n} \leq H\}} &= 0 \\
\text{and } \left(1 - \textrm{Ext}_{i-1}^{n+1}\right) O_{i-1}^{\infty,n} \mathds{1}_{\{h_{i-1}^{\infty,n} \leq H\}} &= 0.
\end{align*}
Therefore, by the induction hypothesis,
\begin{align*}
\left(1 - \textrm{Ext}_{i+1}^{n+1}\right) O_{i+1}^{k,n} \mathds{1}_{\{h_{i+1}^{k,n} \leq H\}} &= 0 \\
\text{and } \left(1 - \textrm{Ext}_{i-1}^{n+1}\right) O_{i-1}^{k,n} \mathds{1}_{\{h_{i-1}^{k,n} \leq H\}} &= 0,
\end{align*}
which means that the patches $i-1$ and $i+1$ are either extinct or containing only ghost type $0$ seeds. Combined with the knowledge that $O_{i}^{k,n} = 0$,  we obtain that $O_{i}^{k,n+1} = 0$. 

We now have to show that $h_{i}^{k,n+1} \geq h_{i}^{\infty,n+1}$. Since $h_{i}^{k,n} \geq h_{i}^{\infty,n}$ and since $h_{i}^{k,n+1}$ (resp. $h_{i}^{\infty,n+1}$) is either equal to $h_{i}^{k,n} + 1$ (resp. $h_{i}^{\infty,n} + 1$) or equal to $0$, the only potential issue is when $h_{i}^{k,n+1} = 0$. Let us assume that $h_{i}^{k,n+1} = 0$. This means that new seeds were just produced, and implies that 
\begin{equation*}
1 - \prod_{i' = i-1}^{i+1}\left(
1 - \left(1 - \textrm{Ext}_{i'}^{n+1}\right) O_{i'}^{k,n} \mathds{1}_{\{h_{i'}^{k,n} \leq H\}}
\right) = 1,
\end{equation*}
i.e, that non-expired seeds were present in at least one of the patches $\{i-1$, $i$, $i+1\}$, and that at least one of these patches was not affected by an extinction event. Moreover,  if
\begin{align*}
 \prod_{i' = i-1}^{i+1}\left(
1 - \left(1 - \textrm{Ext}_{i'}^{n+1}\right) O_{i'}^{\infty,n} \mathds{1}_{\{h_{i'}^{\infty,n} \leq H\}}
\right) = 0, \\
\intertext{then $h_{i}^{\infty,n+1} = 0$. Using the induction hypothesis yields}
 \prod_{i' = i-1}^{i+1}\left(
1 - \left(1 - \textrm{Ext}_{i'}^{n+1}\right) O_{i'}^{\infty,n}  \mathds{1}_{\{h_{i'}^{\infty,n} \leq H\}}
\right) \leq & \prod_{i' = i-1}^{i+1}\left(
1 - \left(1 - \textrm{Ext}_{i'}^{n+1}\right)  O_{i'}^{k,n}  \mathds{1}_{\{h_{i'}^{k,n} \leq H\}}
\right)\\
= & \, 0,
\end{align*}
hence $h_{i}^{\infty,n+1} = 0 = h_{i}^{k,n+1}$ and we can conclude.
\end{proof}

\subsection{Percolation problem}
In order to show Theorem \ref{thm:proba_critique}, we now link the BOA process to a percolation problem. More specifically, we rephrase the question of which patches are reachable in the BOA process as an oriented site percolation problem. Indeed, we can see patch $i \in \zmath$ in generation $n \in \nmath$ as the site $(i,n)$ of the space $\zmath \times \nmath$. Each site $(i,n) \in \zmath \times \nmath$ is \textit{open} (the analog of \textit{non-extinct} in the terminology of percolation) with probability $1-p$, and \textit{closed} (i.e, extinct) otherwise. Reachable patches can be seen as sites of the space $\zmath \times \nmath$ linked to a site of $\zmath \times \{0\}$ by a path of open sites
\begin{equation*}
(i_{0},n_{0}) = (i_{0},0) \longrightarrow (i_{1},n_{1}) \longrightarrow ...
\longrightarrow (i_{L},n_{L}) = (i,n)
\end{equation*}
such that $O_{i_{0}}^{\infty,0}\times \un{h_{i_{0}}^{\infty,n} \leq H} = 1$, $i_{1} \in \{i_{0}-1,i_{0},i_{0}+1\}$, $n_{1}-n_{0} \in \llbracket 1,H-h_{i_{0}}^{\infty,n} + 1 \rrbracket$, and for all $l \in \llbracket 2,L \rrbracket$, 
\begin{equation}\label{cond:path}
i_{l} \in \{i_{l-1}-1, i_{l-1}, i_{l-1} + 1\} \qquad \text{ and } \qquad n_{l} - n_{l-1} \in \llbracket 1,H+1 \rrbracket. 
\end{equation}

For all $n \in \nmath$, let $S_{n}(p)$ be the set of all the sites $(i,n)$ with $i \in \zmath$ that are connected to $(0,0)$ by a path of open sites satisfying $i_{1} \in \{i_{0}-1,i_{0},i_{0}+1\}$, $n_{1}-n_{0} \in \llbracket 1,H+ 1 \rrbracket$ and (\ref{cond:path}). Equivalently, let $(O^{\{0\},n},h^{\{0\},n})_{n \in \nmath}$ be the BOA process with parameters $(H,p)$ and initial condition satisfying:
\begin{enumerate}
\item $O_{0}^{\{0\},0} = 1 \text{ and } h_{0}^{\{0\},0} = 0$.
\item For all $i \in \zmath \backslash \{0\}$, $O_{i}^{\{0\},0} = 0$ and $h_{i}^{\{0\},0} = 0$.
\end{enumerate}
We can then define $S_{n}(p)$ as
\begin{equation*}
S_{n}(p) := \left\{
i \in \zmath : O_{i}^{\{0\},n} \un{h_{i}^{\{0\},n} \leq H} = 1
\right\}.
\end{equation*}
Under this notation, a direct consequence of Eq. (4) in \cite{hartarsky2021generalised} is the following proposition.
\begin{prop}\label{prop:percolation}
There exists a unique $p_{crit}(H) \in (0,1)$ such that
\begin{align*}
\forall p \in [0, p_{crit}(H)),\proba\left(
\forall n \in \nmath, S_{n}(p) \neq \emptyset
\right) &> 0 \\
\text{and }\quad \forall p \in (p_{crit}(H),1], \proba\left(
\forall n \in \nmath, S_{n}(p) \neq \emptyset
\right) &= 0.
\end{align*}
\end{prop}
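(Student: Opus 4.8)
The plan is to recognise $(S_n(p))_{n\in\nmath}$ as the successive level sets of the open cluster of the origin in an oriented site percolation model on $\zmath\times\nmath$, and then to quote the sharp-threshold estimate of \cite{hartarsky2021generalised}. Concretely: declare each site $(i,n)$ with $n\geq 1$ to be \emph{open} (non-extinct) with probability $1-p$, independently, declare $(0,0)$ open, and call $(i,n)$ \emph{reached} if there is a sequence $(0,0)=(i_0,n_0)\to(i_1,n_1)\to\dots\to(i_L,n_L)=(i,n)$ of open sites all of whose increments $(i_l-i_{l-1},\,n_l-n_{l-1})$ lie in the finite set $\mathcal{D}:=\{-1,0,1\}\times\llbracket 1,H+1\rrbracket$ (the space component encoding colonisation of nearest neighbours only, the time component encoding that a patch can send colonising seeds only during the $H+1$ generations following its last colonisation). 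Then $S_n(p)$ is exactly the set of $i\in\zmath$ with $(i,n)$ reached, and $\theta(p):=\proba(\forall n\in\nmath,\ S_n(p)\neq\emptyset)$ is the probability that the origin percolates.

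The first, elementary, ingredient is monotonicity of $\theta$. Realise all extinction variables from one family $(U_{i,n})_{i\in\zmath,\,n\in\nmath^{*}}$ of i.i.d.\ uniform random variables on $[0,1]$, by declaring $(i,n)$ open iff $U_{i,n}\geq p$; increasing $p$ only removes open sites, hence only destroys open $\mathcal{D}$-paths, so $\{S_n(p)\neq\emptyset\}$ is decreasing in $p$ for each $n$ and therefore so is $\theta$. Moreover $\theta(0)=1$, since when $p=0$ every site is open and the vertical path $(0,0)\to(0,1)\to(0,2)\to\dots$, using the increment $(0,1)\in\mathcal{D}$, forces $0\in S_n(0)$ for all $n$. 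Hence $\{p\in[0,1]:\theta(p)>0\}$ is a sub-interval of $[0,1]$ containing $0$; once we know it is neither $\{0\}$ nor all of $[0,1]$, its supremum $p_{crit}(H):=\sup\{p:\theta(p)>0\}$ lies in $(0,1)$, satisfies $\theta(p)>0$ for $p<p_{crit}(H)$ and $\theta(p)=0$ for $p>p_{crit}(H)$ by monotonicity, and is the unique value with that property.

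The substance is therefore the two facts $p_{crit}(H)<1$ and $p_{crit}(H)>0$, i.e.\ that the model has both a subcritical and a supercritical phase, and this is where Eq.~(4) of \cite{hartarsky2021generalised} is used: the family $\mathcal{D}$ is finite, genuinely oriented (every increment has strictly positive time-component) and non-degenerate (its space components generate $\zmath$ and it contains the minimal time-step $1$), so the model falls within that framework, and the cited estimate yields directly that both regimes are non-empty, hence the dichotomy above with a threshold in $(0,1)$. I expect the upper bound $p_{crit}(H)<1$ to be the delicate point: for $H\geq 1$ the long-range-in-time increments make the number of admissible paths reaching level $m$ grow super-exponentially (of order $\binom{m}{m/(H+1)}$ paths use only large time-steps while carrying only about $m/(H+1)$ sites each), so the crudest union bound $\proba(S_m(p)\neq\emptyset)\leq\sum_{L\geq m/(H+1)}3^{L}\binom{m-1}{L-1}(1-p)^{L}$ decays only once $p>1-(3\cdot 2^{H+1})^{-1}$, and a finer entropy-versus-decay balance (or the machinery of \cite{hartarsky2021generalised}) is needed to identify the true threshold; the lower bound $p_{crit}(H)>0$ is a Peierls/contour-type argument, again supplied by \cite{hartarsky2021generalised} (alternatively, one could dominate from below by the $H=0$ model, which is ordinary oriented site percolation and classically supercritical for small $p$). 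Combining the dichotomy from \cite{hartarsky2021generalised} with the monotonicity of $\theta$ and the value $\theta(0)=1$ established above then gives the proposition.
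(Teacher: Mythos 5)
Your proof takes essentially the same route as the paper: it recasts the reachable-patch question as oriented site percolation with increment set $\{-1,0,1\}\times\llbracket 1,H+1\rrbracket$ and invokes Eq.~(4) of \cite{hartarsky2021generalised} for the non-triviality of the threshold, which is exactly what the paper does. The only difference is that you spell out the standard monotone coupling in $p$ and the value $\theta(0)=1$, which the paper leaves implicit; this is a correct and welcome elaboration rather than a divergence.
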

What remains to show is that $p_{crit}(H)$ is indeed the extinction threshold we are looking for.

\subsection{Proof of Theorem \ref{thm:proba_critique}}
In order to prove Theorem \ref{thm:proba_critique}, we make three observations. First, for all $n \in \nmath$, the event $\{S_{n}(p) \neq \emptyset\}$ is the same as the event
\begin{equation*}
\left\{
1 - \prod_{i \in \zmath} \left(
1 - O_{i}^{\{0\},n} \un{h_{i}^{\{0\},n} \leq H}
\right) = 1
\right\}.
\end{equation*}
Moreover, for all finite subsets $\lcal$ of $\zmath$, let $(O^{\lcal},h^{\lcal}) \in \fcal^{\infty} \times \hcal^{\infty}$ satisfy the two following conditions:
\begin{itemize}
\item For all $i \in \lcal$, $O_{i}^{\lcal} = 1$ and $h_{i}^{\lcal} = 0$.
\item For all $i \in \zmath \backslash \lcal$, $O_{i}^{\lcal} = 0$ and $h_{i}^{\lcal} = 0$.
\end{itemize}
Let also $(O^{\lcal,n},h^{\lcal,n})_{n \in \nmath}$ be the BOA process with parameters $(H,p)$ and initial condition $(O^{\lcal},h^{\lcal})$. That is, $(O^{\lcal,n},h^{\lcal,n})_{n \in \nmath}$ is the BOA process starting from the state where all the patches in $\lcal$ are of type $1$ and all the patches in $\lcal^{c}$ of type $0$. 
Notice that if $\lcal = \{0\}$, then the definition of $\left(O^{\{0\},n},h^{\{0\},n}\right))_{n \in \nmath}$ matches the one given above.
We then have the following result.

\begin{lem}\label{lem:percolation_1}
For all finite subset $\lcal$ of $\zmath$ and for all $n \in \nmath$, 
\begin{equation*}
\proba\left(
1 - \prod_{i \in \zmath} \left(
1 - O_{i}^{\lcal,n} \un{h_{i}^{\lcal,n} \leq H}
\right) = 0
\right)
\geq
\proba\left(
1 - \prod_{i \in \zmath} 
\left(
1 - O_{i}^{\{0\},n} \un{h_{i}^{\{0\},n} \leq H}
\right) = 0
\right)^{\Card(\lcal)}.
\end{equation*}
\end{lem}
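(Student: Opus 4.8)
The plan is to exploit the fact that the BOA process is additive/monotone in its initial condition: colonization events spread type~$1$ outward, and superposing several "source" patches can only help. More precisely, write $\lcal = \{x_{1}, \ldots, x_{m}\}$ with $m = \Card(\lcal)$, and for each $r \in \llbracket 1,m \rrbracket$ let $(O^{\{x_{r}\},n},h^{\{x_{r}\},n})_{n \in \nmath}$ be the BOA process with parameters $(H,p)$ started from the single source $\{x_{r}\}$, all built on the \emph{same} family of extinction events $(\Ext_{i}^{n})_{i \in \zmath, n \in \nmath^{*}}$. The first step is to check the monotonicity statement: for any two initial conditions, if the first dominates the second (in the sense $O_{i} \geq O_{i}'$ and, wherever $O_{i}' = 1$, also $h_{i} \leq h_{i}'$), then under the common-extinction coupling this domination is preserved for all $n$. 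This is a short induction on $n$, entirely analogous to the induction already carried out in the proof of Proposition~\ref{prop:coupling_BOA}; the only point to verify at each step is that step~2 of the BOA dynamics, being a coordinatewise "max" over the three neighbours that are reachable and non-extinct, is monotone. Applying this with the initial condition $(O^{\lcal},h^{\lcal})$ dominating each $(O^{\{x_{r}\}},h^{\{x_{r}\}})$ gives
\begin{equation*}
\{i \in \zmath : O_{i}^{\lcal,n}\un{h_{i}^{\lcal,n} \leq H} = 1\} \supseteq \bigcup_{r = 1}^{m} \{i \in \zmath : O_{i}^{\{x_{r}\},n}\un{h_{i}^{\{x_{r}\},n} \leq H} = 1\}
\end{equation*}
for every $n$, on the common probability space.

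Consequently, on the event that none of the single-source processes is extinct at generation~$n$, the process started from $\lcal$ is not extinct either; equivalently, the event $\{1 - \prod_{i}(1 - O_{i}^{\lcal,n}\un{h_{i}^{\lcal,n}\leq H}) = 0\}$ (metapopulation extinct at time~$n$) is \emph{contained in} $\bigcap_{r=1}^{m}\{1 - \prod_{i}(1 - O_{i}^{\{x_{r}\},n}\un{h_{i}^{\{x_{r}\},n}\leq H}) = 0\}$. Hence $\proba$ of the left-hand event is at most $\proba$ of this intersection. The second step is to recognise that, by translation invariance of the law of the extinction field and of the BOA dynamics, each marginal event $\{\text{process from } \{x_{r}\} \text{ extinct at time } n\}$ has the \emph{same} probability as $\{\text{process from } \{0\} \text{ extinct at time } n\}$, namely $q := \proba(1 - \prod_{i}(1 - O_{i}^{\{0\},n}\un{h_{i}^{\{0\},n}\leq H}) = 0)$.

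The final — and only slightly delicate — step is the correlation inequality: we need $\proba\big(\bigcap_{r=1}^{m} A_{r}\big) \geq \prod_{r=1}^{m}\proba(A_{r}) = q^{m}$, where $A_{r}$ is the event that the process started from $\{x_{r}\}$ is extinct at time~$n$. Each $A_{r}$ is a \emph{decreasing} event with respect to the extinction field when we order configurations by declaring "more sites extinct" to be larger: adding extinction events can only kill off reachable patches, never create them, so $\{$extinct at time $n\}$ is an increasing event in the extinctions, i.e. $A_r$ is increasing in the partial order $\Ext \preceq \Ext'$ iff $\Ext_i^n \le \Ext_i'^n$ for all $i,n$. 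Since the extinction events are i.i.d., the FKG inequality for product measures on $\{0,1\}^{\zmath\times\nmath^{*}}$ applies and yields $\proba(\bigcap_r A_r) \geq \prod_r \proba(A_r)$. I expect this FKG step to be the main obstacle, in the sense that it is the one genuinely new ingredient (as opposed to a rerun of the Proposition~\ref{prop:coupling_BOA} induction); care is needed to state precisely the monotonicity of $A_r$ in the extinction field and to invoke FKG in the (infinite-product) setting, though restricting attention to the finitely many coordinates that can influence patches near $\lcal$ within $n$ generations makes this rigorous without any measure-theoretic subtlety. Combining the three steps gives $\proba(\text{extinct from }\lcal\text{ at }n) \le \proba(\bigcap_r A_r)$... wait — the inequality in the lemma goes the other way, so let me restate: we have shown $\{\text{extinct from }\lcal\} \subseteq \bigcap_r A_r$, hence $\proba(\text{extinct from }\lcal) \le \proba(\bigcap_r A_r)$; but we want a \emph{lower} bound on $\proba(\text{extinct from }\lcal)$. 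The correct route is therefore the reverse containment: actually $\{\text{extinct from }\lcal \text{ at }n\} = \bigcap_{r}A_{r}$ exactly, because the reachable set from $\lcal$ is precisely the union of the reachable sets from the singletons (the displayed $\supseteq$ is in fact an equality, as the BOA dynamics started from a union of sources produces exactly the union of the individual colonized regions). Given this equality, $\proba(\text{extinct from }\lcal\text{ at }n) = \proba(\bigcap_r A_r) \ge \prod_r \proba(A_r) = q^{\Card(\lcal)}$ by FKG, which is exactly the claimed bound.
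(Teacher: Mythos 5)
Your proposal is correct and follows essentially the same route as the paper: decompose the reachable set from $\lcal$ as the exact union of the single-source reachable sets under the common-extinction coupling, use translation invariance to identify each marginal extinction probability with the one started from $\{0\}$, and conclude by positive correlation of the single-source extinction events. The only difference is in the last step, where you invoke FKG/Harris for increasing events of the i.i.d.\ extinction field (restricted to the finitely many relevant coordinates), while the paper argues the same positive correlation informally via iterated conditioning (``blocking''); your version is, if anything, the more rigorous justification of that step.
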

This lemma gives a lower bound of the probability that no patches are reachable in at least $n$ generations in the BOA process starting from the patches in $\lcal$, each one of them containing type $1$ seeds of age $0$. This lower bound involves the probability that no patches are reachable in at least $n$ generations starting from \textit{only one patch}, which is used in the definition of $p_{crit}(H)$.

\begin{proof}
Let $n \in \nmath$ and let $\lcal$ be a finite subset of $\zmath$. First, we observe that if we couple all the BOA processes considered by constructing them using the same extinction events, 
\begin{equation*}
1 - \prod_{i \in \zmath} \left(
1 - O_{i}^{\lcal,n} \un{h_{i}^{\lcal,n} \leq H}
\right)
= 1 - \prod_{i' \in \lcal} \left[\prod_{i \in \zmath} \left(
1 - O_{i}^{\{i'\},n} \un{h_{i}^{\{i'\},n} \leq H}
\right)\right].
\end{equation*}
Indeed, each one of the reachable patches in the BOA process with initial conditions $(O^{\lcal},h^{\lcal})$ is connected by a path of nonextinct patches to a patch in $\lcal$, and so there exists $i_{0} \in \lcal$ such as the patch is also reachable in the BOA process with initial condition $(O^{\{i_{0}\}},h^{\{i_{0}\}})$. We can then use the fact that all the quantities appearing in the product are $\{0,1\}$-valued. 

Moreover, for $i_{0}, i_{1} \in \lcal$ and again using our coupling, knowing that no patch is reachable in $n$ generations starting from $i_{0}$ increases the probability that no patch is reachable in $n$ generations starting from $i_{1}$. Indeed, informally, the fact that no patch is reachable starting from $i_{0}$ "blocks" some patches, which cannot be used by a path linking $i_{1}$ to other patches. Therefore, 
\begin{align*}
&\proba\left(\left.
\prod_{i \in \zmath} \left(
1 - O_{i}^{\{i_{1}\},n} \un{h_{i}^{\{i_{1}\},n} \leq H}
\right) = 1
\right|
\prod_{i \in \zmath} \left(
1 - O_{i}^{\{i_{0}\},n} \un{h_{i}^{\{i_{0}\},n} \leq H}
\right) = 1
\right) \\
&\geq \proba \left(
\prod_{i \in \zmath} \left(
1 - O_{i}^{\{i_{1}\},n} \un{h_{i}^{\{i_{1}\},n} \leq H}
\right) = 1
\right),
\end{align*}
and hence for $i_{0} \in \lcal$,
\begin{align*}
\proba\left(
1-\prod_{i \in \zmath} \left(
1 - O_{i}^{\lcal,n} \un{h_{i}^{\lcal,n} \leq H}
\right) = 0
\right) 
&= \proba\left(
1-\prod_{i' \in \lcal}\left[\prod_{i \in \zmath} \left(
1 - O_{i}^{\{i'\},n} \un{h_{i}^{\{i'\},n} \leq H}\right]
\right) = 0
\right) \\
&= \proba\left(
\bigcap_{i' \in \lcal} \left\{
\prod_{i \in \zmath} \left(
1 - O_{i}^{\{i'\},n} \un{h_{i}^{\{i'\},n} \leq H}
\right) = 1
\right\}
\right) \\
&\geq \proba\left(
\prod_{i \in \zmath} \left(
1 - O_{i}^{\{i_{0}\},n} \un{h_{i}^{\{i_{0}\},n} \leq H}
\right) = 1
\right)^{\Card(\lcal)} \\
&= \proba\left(
\prod_{i \in \zmath} \left(
1 - O_{i}^{\{0\},n} \un{h_{i}^{\{0\},n} \leq H}
\right) = 1
\right)^{\Card(\lcal)},
\end{align*}
where the invariance by translation of the process is used to pass from the last but first to the last line.
\end{proof}

We recall that the $k$-parent occupancy process associated to $(\xi^{n},h^{n})_{n \in \nmath}$ is denoted by $(O^{k,n},h^{k,n})_{n \in \nmath}$. The coupling based on the extinction events also yields the following lemma.
\begin{lem}\label{lem:percolation_2}
Let $\lcal \subset \zmath$ be the set defined as
\begin{equation*}
\lcal := \left\{i \in \zmath : O_{i}^{k,0} = 1\right\}.
\end{equation*}
Then, 
\begin{equation*}
\proba\left(
1 - \prod_{i \in \zmath} \left(
1 - O_{i}^{k,n} \un{h_{i}^{k,n} \leq H}
\right) = 0
\right)
\geq 
\proba\left(
1 - \prod_{i \in \zmath} \left(
1 - O_{i}^{\lcal,n} \un{h_{i}^{\lcal,n} \leq H}
\right) = 0
\right).
\end{equation*}
\end{lem}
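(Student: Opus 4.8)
The plan is to prove the stronger pathwise statement that, under a coupling by the extinction events, the set of patches containing a viable type~$1$ seed in the $k$-parent occupancy process is, at every generation, contained in the set of reachable patches of the $\lcal$-BOA process; taking complements and probabilities then yields the claimed inequality. Concretely, I would run three processes off the same extinction events $(\Ext_{i}^{n})_{i\in\zmath,\,n\in\nmath^{*}}$ of the $k$-parent WFSB metapopulation process: the associated $k$-parent occupancy process $(O^{k,n},h^{k,n})_{n}$, the BOA process $(O^{\infty,n},h^{\infty,n})_{n}$ associated to it (as in Section~\ref{sec:BOA_process}), and the BOA process $(O^{\lcal,n},h^{\lcal,n})_{n}$ started from $(O^{\lcal},h^{\lcal})$; this is legitimate since all these extinction events are i.i.d.\ Bernoulli$(p)$.

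The first link, $\{i : O_i^{k,n}\un{h_i^{k,n}\le H}=1\}\subseteq\{i : O_i^{\infty,n}\un{h_i^{\infty,n}\le H}=1\}$, is immediate from Proposition~\ref{prop:coupling_BOA}: $O_i^{k,n}\le O_i^{\infty,n}$ and $h_i^{k,n}\ge h_i^{\infty,n}$ force $O_i^{k,n}\un{h_i^{k,n}\le H}\le O_i^{\infty,n}\un{h_i^{\infty,n}\le H}$ for all $i,n$.

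The second and main link is a monotonicity property of the BOA process in its initial condition. Observe first that $O_i^{k,0}=1$ exactly when $i\in\lcal$, i.e.\ $O^{k,0}$ and $O^{\lcal}$ have the same support, while $h_i^{\lcal}=0\le h_i^{k,0}$ for every $i$. I would then prove by induction on $n$ that for two BOA processes with parameters $(H,p)$ driven by the same extinction events, initial conditions $(O,h),(O',h')$ with $O_i\le O'_i$ and $h_i\ge h'_i$ for all $i$ imply $O_i^{n}\le O_i'^{n}$ and $h_i^{n}\ge h_i'^{n}$ for all $n,i$. For the induction step, the comparison of $(O^n,h^n)$ first gives the comparison of reachability indicators $O_i^{n}\un{h_i^{n}\le H}\le O_i'^{n}\un{h_i'^{n}\le H}$ (this is the one place where having a smaller age helps); consequently every patch updated by step~2 of the BOA dynamics in the unprimed process is also updated in the primed process, where it receives the dominating pair $(1,0)$, and every patch not updated by step~2 in the primed process is also not updated in the unprimed one, so step~3 preserves both inequalities. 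Applying this with the associated BOA process as the unprimed process and $(O^{\lcal,n},h^{\lcal,n})$ as the primed one yields $O_i^{\infty,n}\le O_i^{\lcal,n}$ and $h_i^{\infty,n}\ge h_i^{\lcal,n}$, hence $\{i : O_i^{\infty,n}\un{h_i^{\infty,n}\le H}=1\}\subseteq\{i : O_i^{\lcal,n}\un{h_i^{\lcal,n}\le H}=1\}$.

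Chaining the two inclusions and taking complements gives $\{1-\prod_{i\in\zmath}(1-O_i^{\lcal,n}\un{h_i^{\lcal,n}\le H})=0\}\subseteq\{1-\prod_{i\in\zmath}(1-O_i^{k,n}\un{h_i^{k,n}\le H})=0\}$ under the coupling, and taking probabilities finishes the proof. The only genuinely nontrivial step is the monotonicity-in-initial-condition lemma for the BOA process, and within it the passage to reachability indicators; everything else is bookkeeping. An alternative, essentially equivalent, route for that step is to use the path characterisation of reachability from the \emph{Percolation problem} subsection: lowering the initial age of an occupied patch from $h_i^{k,0}$ to $0$ only enlarges the admissible time window for the first colonisation step of a path, so every colonisation path available to the associated BOA process remains available to the $\lcal$-BOA process.
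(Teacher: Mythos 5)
Your proposal is correct and follows essentially the same route as the paper: couple all processes through the shared extinction events and establish the pathwise containment of occupied patches in the reachable set of the $\lcal$-BOA process, the key point being that resetting the initial ages to $0$ can only help. The paper states this containment directly in one sentence, whereas you make it rigorous by factoring through the associated BOA process (Proposition~\ref{prop:coupling_BOA}) and an explicit monotonicity-in-initial-condition lemma for the BOA dynamics; this is a welcome elaboration of the same idea rather than a different argument.
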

Indeed, if $(\xi^{n},h^{n})_{n \in \nmath}$ (hence $(O^{k,n},h^{k,n})_{n \in \nmath}$) and $(O^{\lcal,n},h^{\lcal,n})_{n \in \nmath}$ are constructed using the same extinction events, then all the patches occupied by the $k$-parent WFSB metapopulation process are also reachable by the BOA process $(O^{\lcal,n},h^{\lcal,n})_{n \in \nmath}$. Here deviations from the BOA process $(O^{\lcal,n},h^{\lcal,n})_{n \in \nmath}$ can also occur if the youngest type $1$ seeds in $(\xi^{0},h^{0})$ are \textit{not} of age $0$, but older.

We can now prove Theorem \ref{thm:proba_critique}.

\begin{proof}
(Theorem \ref{thm:proba_critique}) 
Let $p_{crit}(H)$ be given by Proposition \ref{prop:percolation}. We assume that $p > p_{crit}(H)$. Let also $n \in \nmath$, and let $\lcal \subset \zmath$ be defined as in Lemma \ref{lem:percolation_2}.

By Lemma \ref{lem:percolation_2},
\begin{align*}
\proba\left(
\forall i \in \zmath, O_{i}^{k,n} \un{h_{i}^{k,n} \leq H} = 0
\right)
&= \proba\left(
1 - \prod_{i \in \zmath} \left(
1 - O_{i}^{k,n} \un{h_{i}^{k,n} \leq H}
\right) = 0
\right) \\
&\geq \proba\left(
1 - \prod_{i \in \zmath}\left(
1 - O_{i}^{\lcal,n} \un{h_{i}^{\lcal,n} \leq H}
\right) = 0
\right).
\end{align*}
Using Lemma \ref{lem:percolation_1}, we obtain
\begin{align*}
\proba\left(
\forall i \in \zmath, O_{i}^{k,n} \un{h_{i}^{k,n} \leq H} = 0
\right)
&\geq \proba\left(
1 - \prod_{i \in \zmath}\left(
1 - O_{i}^{\{0\},n} \un{h_{i}^{\{0\},n} \leq H}
\right) = 0
\right)^{\Card(\lcal)} \\
&= \proba\left(
S_{n}(p) = \emptyset
\right)^{\Card(\lcal)}.
\end{align*}
Therefore, 
\begin{align*}
\lim\limits_{n \to + \infty} \proba\left(
\forall i \in \zmath, O_{i}^{k,n} \un{h_{i}^{k,n} \leq H} = 0
\right) &\geq \lim\limits_{n \to + \infty} \proba(S_{n}(p) =  \emptyset)^{\Card(\lcal)} \\
&\geq 1
\end{align*}
by Proposition \ref{prop:percolation}, and we can conclude.
\end{proof}

\section{Appendix - Computation of $p_{crit}(H)$}
In this section, we briefly explain how to compute $p_{crit}(H)$, and how to implement this approach and obtain an approximation for $p_{crit}(H)$. The computation method is a direct adaptation of Section~$3$ in \cite{durrett1984oriented}. Our goal here is not to obtain very precise approximations, but rather to have a rough estimate of $p_{crit}(H)$, and use it to assess the impact of the presence of a seed bank on the extinction threshold. 

We first introduce the following notation. For all $i \in \zmath$ and $n \in \nmath$, let $U^{i,n}$ be a random variable such that $U^{i,n} \sim \textrm{Unif}([0,1])$. We assume that all the random variables $(U^{i,n})_{i \in \zmath, n \in \nmath}$ are independent. For all $p \in [0,1]$, let $\mathcal{S}_{p}$ be the set defined as 
\begin{equation*}
\mathcal{S}_{p} := \left\{
(i,n) : i \in \zmath, n \in \nmath \text{ and } U^{i,n} \geq p
\right\}.
\end{equation*}
$\mathcal{S}_{p}$ can be interpreted as the set of patches which would be non-extinct, if the extinction probability was equal to $p$. 

For all $x,y \in \zmath$, $n^{(x)}, n^{(y)} \in \nmath$, $H \in \nmath$ and $p \in [0,1]$, we will say that $(x,n^{(x)})$ is \textit{(H,p)-reachable from} $(y,n^{(y)})$, and denote it as $(y,n^{(y)}) \xrightarrow[(H,p)]{} (x,n^{(x)})$, if there exists $L \in \nmath$, $x_{0}$, $x_{1}$,..., $x_{L} \in \zmath$ and $n_{0}$, $n_{1}$,..., $n_{L} \in \nmath$ such that:
\begin{enumerate}
\item $x_{0} = y$, $n_{0} = n^{(y)}$, $x_{l} = x$ and $n_{l} = n^{(x)}$,
\item $\forall l \in \llbracket 1,L \rrbracket$, $x_{l} \in \{x_{l-1} - 1, x_{l-1}, x_{l-1} + 1\}$ and $1 \leq n_{l} - n_{l-1}  \leq H+1$,
\item $\forall l \in \llbracket 1,L \rrbracket$, $(x_{l},n_{l}) \in \mathcal{S}_{p}$.
\end{enumerate}
In other words, $(y,n^{(y)}) \xrightarrow[(H,p)]{} (x,n^{(x)})$ if there exists a path of open sites going from $(y,n^{(y)})$ to $(x,n^{(x)})$, spending at most $H$ generations in each patch.

Moreover, for all $p \in [0,1]$ and $n \in \nmath$, let $\mathbf{\bar{\xi}_{n}}(H,p)$ be the set defined as:
\begin{equation*}
\mathbf{\bar{\xi}_{n}}(H,p) := \left\{
x \in \zmath : \exists h_{x}, h_{y} \in \llbracket 0,H \rrbracket, \exists y \in \zmath \backslash(\nmath \backslash \{0\}), (y,h_{y}) \xrightarrow[(H,p)]{} (x,n+h_{x})
\right\},
\end{equation*}
and let $\bar{r}_{n}(H,p) := \sup\mathbf{\bar{\xi}_{n}}(H,p)$. $\mathbf{\bar{\xi}_{n}}(H,p)$ is akin to the set of patches which are reachable in $n$ generations in a BOA process with parameters $(H,p)$, but starting from an infinite number of patches. 

A direct adaptation of Section~$3$ from \cite{durrett1984oriented} yields the following result.

\begin{lem}
For all $H \in \nmath$, 
\begin{equation*}
p_{crit}(H) := \max \left\{
p \in [0,1] : \lim\limits_{n \to + \infty} \frac{\bar{r}_{n}(H,p)}{n} \geq 0
\right\}.
\end{equation*}
\end{lem}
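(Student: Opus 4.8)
The plan is to transpose the right-edge analysis of \cite[Section~3]{durrett1984oriented} to the ``time-range-$(H+1)$'' oriented site percolation underlying the BOA process. Work once and for all on the probability space carrying the i.i.d.\ uniforms $(U^{i,n})_{i \in \zmath,\, n \in \nmath}$, so that the environments $(\mathcal{S}_p)_{p \in [0,1]}$ are monotonically coupled ($\mathcal{S}_{p'} \subseteq \mathcal{S}_p$ when $p \le p'$) and every $(H,p)$-reachability event is non-increasing in $p$. Note that $\bar{\xi}_n(H,p)$ is exactly the time-$n$ reachable set of the BOA-type reachability relation started from the non-positive patches (up to the harmless initial and final time-offsets $h_y,h_x \in \llbracket 0,H \rrbracket$), so $\bar{r}_n(H,p) = \sup \bar{\xi}_n(H,p)$ is its right edge. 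I would establish three facts: (i) $\bar{r}_n(H,p)/n$ converges a.s.\ and in $L^1$ to a deterministic constant $\bar{\alpha}(H,p)$, non-increasing in $p$; (ii) $\bar{\alpha}(H,p) > 0$ whenever $p < p_{crit}(H)$ and $\bar{\alpha}(H,p) < 0$ whenever $p > p_{crit}(H)$; (iii) the set $\{p \in [0,1] : \bar{\alpha}(H,p) \ge 0\}$ is closed. Since $p_{crit}(H) \in (0,1)$ by Proposition~\ref{prop:percolation}, these three facts force that set to equal $[0,p_{crit}(H)]$, whose maximum is $p_{crit}(H)$ --- which is the asserted identity.

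For (i): the reachability relation is self-similar under time shifts, so any path realising $\bar{r}_{m+n}(H,p)$ may be cut at its last vertex at or before time $m$ (which, since time-increments are at most $H+1$, lies in a bounded window below $m$). This produces, up to a bounded correction, a subadditivity relation $\bar{r}_{m+n}(H,p) \le \bar{r}_m(H,p) + \bar{r}_n(H,p) \circ \theta_m$, where $\theta_m$ shifts the environment by $m$ units of time: any site reachable near time $m+n$ is reachable from a site lying weakly left of $\bar{r}_m(H,p)$ near time $m$, whose extra rightward displacement over the next $n$ generations is distributed as $\bar{r}_n(H,p)$ translated by $\bar{r}_m(H,p)$. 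One then checks the hypotheses of Kingman's subadditive ergodic theorem: $\bar{r}_n(H,p) \le n+H$ deterministically, and a spatial-independence argument (straight-down open paths in disjoint columns) shows the lower tail of $\bar{r}_n(H,p)$ decays exponentially, so $\esp[|\bar{r}_n(H,p)|] < \infty$; moreover, for $p < 1$ the half-line process survives forever a.s.\ (by translation-ergodicity $\proba(\forall n,\ \bar{\xi}_n(H,p) \ne \emptyset) \in \{0,1\}$, and it is bounded below by the positive survival probability of a single patch). Kingman's theorem then gives $\bar{r}_n(H,p)/n \to \bar{\alpha}(H,p) := \inf_{n \ge 1} \esp[\bar{r}_n(H,p)]/n$ a.s.\ and in $L^1$. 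Monotonicity of $\bar{\alpha}(H,\cdot)$ is immediate from the coupling, and since each $p \mapsto \esp[\bar{r}_n(H,p)]$ is continuous (the $U^{i,n}$ being atomless), (iii) follows at once: $\{p : \bar{\alpha}(H,p) \ge 0\} = \bigcap_{n \ge 1} \{p : \esp[\bar{r}_n(H,p)] \ge 0\}$ is an intersection of closed sets.

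For (ii): if $p < p_{crit}(H)$, Proposition~\ref{prop:percolation} shows that the process started from the single patch $0$ survives with positive probability, and a restart/block construction as in \cite{durrett1984oriented} upgrades this to linear growth of the right edge on the survival event; since the half-line process dominates a single-patch process and $\bar{r}_n(H,p)/n$ converges to the deterministic constant $\bar{\alpha}(H,p)$, we get $\bar{\alpha}(H,p) > 0$. If $p > p_{crit}(H)$, one invokes self-duality: reversing time and space, the event $\{\bar{r}_n(H,p) \ge x\}$ turns into a connection event for an oriented percolation with the same step set $\{-1,0,1\} \times \llbracket 1,H+1 \rrbracket$ (symmetric under $(x,n) \mapsto (-x,-n)$) and the same parameter $p$, now in its subcritical regime; the exponential decay of subcritical connectivities, fed into a contour/summation argument, forces $\bar{r}_n(H,p) \le -\delta n$ eventually for some $\delta = \delta(H,p) > 0$, i.e.\ $\bar{\alpha}(H,p) < 0$. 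Combining (i)--(iii): the closed set $\{p : \bar{\alpha}(H,p) \ge 0\}$ contains $[0,p_{crit}(H))$ and is disjoint from $(p_{crit}(H),1]$; being closed and, by monotonicity of $\bar{\alpha}$, downward-closed, it equals $[0,p_{crit}(H)]$, with maximum $p_{crit}(H)$.

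The main obstacle is step (ii), specifically its subcritical half: establishing the strict inequality $\bar{\alpha}(H,p) < 0$ for $p > p_{crit}(H)$ --- i.e.\ genuine linear recession of the right edge, not merely $\bar{r}_n(H,p) \to -\infty$ sublinearly --- is what gives the identity its force, and it rests on the self-duality of the BOA reachability relation together with the exponential decay of subcritical oriented connectivities. The delicate point of the adaptation is to verify that the reachability kernel used by the BOA process --- which permits time-increments anywhere in $\llbracket 1,H+1 \rrbracket$ while requiring only the two endpoints of each step to be non-extinct --- is genuinely self-dual, so that Durrett's two-dimensional contour and restart arguments carry over with only bookkeeping changes for the extra time-range.
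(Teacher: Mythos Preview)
Your proposal is correct and is precisely the adaptation of \cite[Section~3]{durrett1984oriented} that the paper invokes; the paper itself gives no argument beyond that citation, and your outline of the subadditive-ergodic limit, the restart argument in the supercritical phase, the dual/exponential-decay argument in the subcritical phase, and the closedness via the infimum representation is exactly how that adaptation runs. One small slip worth fixing: your justification that the half-line process survives a.s.\ for all $p<1$ by bounding from below with the single-patch survival probability only works when $p<p_{crit}(H)$; for $p\ge p_{crit}(H)$ that probability is zero, but the disjoint-columns argument you already use for the lower-tail bound on $\bar r_n$ gives the correct justification uniformly in $p<1$.
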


Therefore, in order to compute $p_{crit}(H)$, it is possible to simulate the random variable $\bar{r}_{n}(H,p)$ for a large value of $n$ and for different values of $p$. 

Let $H \in \nmath$. In order to obtain an approximation for $p_{crit}(H)$, we first define some approximations for $\mathbf{\bar{\xi}_{n}}(H,p)$ and $\bar{r}_{n}(H,p)$. Let $p \in [0,1]$. For all $x,y \in \llbracket -10500,10500 \rrbracket$ and $n^{(x)}, n^{(y)} \in \llbracket 0,10000 \rrbracket$, we will say that $(x,n^{(x)})$ 
is \textit{approximatively $(H,p)$-reachable from} $(y,n^{(y)})$, and denote it as 
\begin{equation*}
(y,n^{(y)}) \xrightarrow[\mathrm{Approx}(H,p)]{} (x,n^{(x)}), 
\end{equation*}
if there exists $L \in \nmath$, $x_{0}$,... , $x_{L} \in \llbracket -10500,10500 \rrbracket$ and $n_{0}$,... ,$n_{L} \in \llbracket 0,10000 \rrbracket$ such that:
\begin{enumerate}
\item $x_{0} = y$, $n_{0} = n^{(y)}$, $x_{l} = x$ and $n_{l} = n^{(x)}$.
\item $\forall l \in \llbracket 1,L \rrbracket$, $x_{l} \in \{x_{l-1} - 1, x_{l-1}, x_{l-1} + 1\}$ and $1 \leq n_{l} - n_{l-1}  \leq H+1$.
\item $\forall l \in \llbracket 1,L \rrbracket$, if $x_{l} \neq -10500$, then $(x_{l},n_{l}) \in \mathcal{S}_{p}$ and $x_{l} \neq 10500$. 
\end{enumerate}
Therefore, in the approximation, the paths linking two sites together have to remain in the domain $\llbracket -10500, 10500 \rrbracket$, with extra conditions at the border of the domain. Since the value of the quantity we are interested in depends on the presence of paths staying close to the centre of the domain, we can assume that the border conditions chosen will not affect the approximate value. 

We then define
\begin{equation*}
\mathbf{\textrm{Approx}(\bar{\xi}_{n}}(H,p)) := \left\{
x \in \zmath : \exists h_{x}, h_{y} \in \llbracket 0,H \rrbracket, \exists y \in \zmath \backslash(\nmath \backslash \{0\}), (y,h_{y}) \xrightarrow[\mathrm{Approx}(H,p)]{} (x,n+h_{x})
\right\},
\end{equation*}
and let $\mathrm{Approx}(\bar{r}_{n}(H,p)) := \sup\mathbf{\mathrm{Approx}(\bar{\xi}_{n}}(H,p))$.

In order to compute an approximate value for $p_{crit}(H)$, we apply the following method, starting from $p = 0.99$. 
\begin{enumerate}
\item We simulate the random variable $\mathrm{Approx}(\bar{r}_{10000}(H,p)) \times (10000)^{-1}$.
\item If the value obtained is larger than $-0.005$, we take $p_{crit}(H) = p$.
\item Otherwise, we substitute $p$ with $p - 0.01$, and restart at Step~$1$.
\end{enumerate}

\begin{acknowledgements}
\quad The author would like to thank her PhD supervisors Nathalie Machon and Amandine Véber for helpful discussions about the model and for their comments on the manuscript. The author is also grateful to the two anonymous reviewers for their helpful and constructive comments and suggestions. 
This work was partly supported by the chaire program "Modélisation Mathématique et Biodiversité" of Veolia Environnement-Ecole Polytechnique-Museum National d’Histoire Naturelle-Fondation X.
\end{acknowledgements}

\bibliographystyle{plain}
\bibliography{biblio_pa_bg_maths}

\end{document}